\documentclass{article}

\usepackage[english]{babel}

\usepackage[letterpaper,top=2cm,bottom=2cm,left=3cm,right=3cm,marginparwidth=1.75cm]{geometry}

\usepackage{verbatim}
\usepackage{caption}
\usepackage{float}
\usepackage{pifont}
\usepackage{enumerate}
\usepackage{amsmath}
\usepackage{graphicx}
\usepackage[colorlinks=true, allcolors=blue]{hyperref}
\newtheorem{theorem}{Theorem}[section]
\newtheorem{conjecture}[theorem]{Conjecture}
\newtheorem{lemma}[theorem]{Lemma}

\newtheorem{claim}{Claim}[section]
\newtheorem{subclaim}{Subclaim}[claim]

\def\qed{\hfill \rule{4pt}{7pt}}

\def\pf{\noindent {\it Proof. }} 

\title{Graphs with girth 8 and without longer even holes are 3-colorable}
\author{Yan Wang$^{1,}$ \thanks{Supported by National Key R\&D Program of China under grant No. 2022YFA1006400, National Natural Science Foundation of China under grant No. 12571376, email: yan.w@sjtu.edu.cn},  \;\; Rong Wu$^{2,}$ \thanks{Supported by National Key R\&D Program of China under Grant No. 2022YFA1006400, email: wurong@shmtu.edu.cn} \\\\
    \small $^1$School of Mathematical Sciences, CMA-Shanghai\\
    \small Shanghai Jiao Tong University, 800 Dongchuan Road, Shanghai 200240, China\\
    \small $^2$School of  Science \\
    \small Shanghai Maritime University,  Shanghai 201306, China}

\begin{document}
\maketitle

\begin{abstract}
For an integer $\ell\geq 2$, let ${\cal{H}}_{\ell}$ denote the family of graphs which have girth $2\ell$ and have no even hole of length greater than $2\ell$. Wu, Xu and Xu conjectured that every graph in $\bigcup_{\ell\geq 2} {\cal{H}}_{\ell}$ is $3$-colorable. Chen showed that every graph in $\bigcup_{\ell\geq 5} {\cal{H}}_{\ell}$ is $3$-colorable.
In this paper, we prove that every graph in ${\cal{H}}_4$ is $3$-colorable.
\end{abstract}

\section{Introduction}

All graphs considered in this paper are finite, simple, and undirected. 
Let $G$ be a graph and let $S$ be a subset of $V(G)$.
We use $G[S]$ to denote the subgraph of $G$ induced by $S$.
Let $x\in V(G)$ (we also write $x\in G$ if there is no confusion), we use $N_S(x)$ to denote the neighbours of $x$ in $S$.
%For subgraphs $H$ and $H'$ of $G$, we use $H \triangle H'$ to denote the symmetric difference of $H$ and $H'$, that is, 
%$V(H\triangle H')=V(H)\cup V(H')\backslash \{V(H)\cap V(H')\}$ and $E(H\triangle H')=E(H)\cup E(H')\backslash \{E(H)\cap E(H')\}$.
Let $P$ be an $(x, y)$-path, that is, the ends of $P$ are $x$ and $y$.
And we usually use $xPy$ to denote $P$, if there is no confusion, we just write it as $P$.
Let $P^*$ denote the set of internal vertices of $P$.
%Let $C$ be a cycle and $u, v$ be two vertices of $C$. We use $C(u, v)$ to denote the subpath of $C$ from $u$ to $v$ in clockwise order, and $C^*(u, v)$ to denote the set of internal vertices of $C(u, v)$.

A graph $G$ is $k$-colorable if there exists a mapping $c: V(G)\rightarrow \{1, 2, \cdots, k\}$ such that $c(u)\neq c(v)$ whenever $uv\in E(G)$.
The \textit{chromatic number $\chi(G)$} of $G$ is the minimum integer $k$ such that $G$ is $k$-colorable.
The clique number $\omega(G)$ of $G$ is the maximum integer $k$ such that $G$ contains a complete graph of size $k$.
For a graph $G$, if $\chi(G)=\omega(G)$, then we call $G$ a \textit{perfect} graph.
For a graph $H$, we say that $G$ is \textit{$H$-free} if $G$ induces no $H$ (i.e., $G$ has no induced subgraph isomorphic to $H$).
Let $\cal F$ be a family of graphs. We say that $G$ is \textit{$\cal F$-free} if $G$ induces no member of $\cal F$.
If there exists a function $\phi$ such that $\chi(G)\leq \phi(\omega(G))$ for each $G\in {\cal F}$, then we say that $\cal{F}$ is $\chi$-\textit{bounded class}, and call $\phi$ a \textit{binding function} of $\cal{F}$.
The concept of $\chi$-boundedness was posed by Gy\'{a}rf\'{a}s in 1975 \cite{g2}.
Studying which family of graphs can be $\chi$-bounded, and finding the optimal binding function for $\chi$-bounded class are important problems in this area.
Since clique number is a trivial lower bound of chromatic number, if a family of $\chi$-bounded graphs has a linear binding function, then this function must be the asymptotically optimal binding function of this family.
For recent progress on $\chi$-bounded problems, see \cite{ss}.

A hole in a graph is an induced cycle of length at least $4$.
A hole is said to be \textit{odd} (resp. \textit{even}) if it has odd (resp. even) length.
Addario-Berry, Chudnovsky, Havet, Reed and Seymour \cite{achrs}, and Chudnovsky and Seymour \cite{cs08}, proved that every even hole free graph has a vertex whose neighbours are the union of two cliques, which implies that $\chi(G)\leq 2\omega(G)-1$.
However, the situation becomes much more complicated for odd hole free graphs.
The Strong Perfect Graph Theorem \cite{cs02} asserts that a graph is perfect if and only if it induces neither odd holes nor their complements.
Confirming a conjecture of Gy\'{a}rf\'{a}s \cite{g2},
Scott and Seymour \cite{sso} proved that odd hole free graphs are $\chi$-bounded with binding function $\frac{2^{2^{\omega(G)+2}}}{48(\omega(G)+2)}$.
Ho\`{a}ng and McDiarmid \cite{hm} conjectured for an odd hole free graph $G$, $\chi(G)\leq 2^{\omega(G)-1}$.

%Sivaraman \cite{sivaraman} conjectured that $\chi(G)\leq \omega^2(G)$ for all short-holed graphs whereas the best known upper-bound is $\chi(G)\leq 10^{20}2^{\omega^2(G)}$ due to Scott and Seymour \cite{ss}.

A related direction concerns graphs with restricted hole lengths.
A graph is $\ell$-holed if every hole has length exactly $\ell$. 
Cook, Horsfield, Preissmann, Robin and Seymour \cite{chprsstv} gave a complete structural description of $\ell$-holed graphs for every $\ell\ge 7$.
The chromatic behavior of $\ell$-holed graphs strongly depends on the parity of $\ell$.
If $\ell\ge 6$ is even, then every $\ell$-holed graph is perfect, and hence $\chi(G)=\omega(G)$. 
In contrast, for odd $\ell\ge 7$, Wang and Wu \cite{ww25+} proved that
$\chi(G)\leq {\lceil {\ell \over {\ell-1}}\omega(G) \rceil}$,
and this bound is optimal.
A graph is said to be \textit{short-holed} if every hole of it has length 4. 
%A closely related notion is that of short-holed graphs, in which every hole has length exactly four. 
Sivaraman \cite{sivaraman} conjectured that every short-holed graph satisfies $\chi(G)\le \omega(G)^2$, while the best known general upper bound remains exponential, namely
$\chi(G)\leq 10^{20}2^{\omega(G)^2}$, due to Scott and Seymour \cite{ss}.

The girth of a graph $G$, denoted by $g(G)$, is the minimum length of a cycle in $G$.
Let $\ell\geq 2$ be an integer. Let ${\cal G}_{\ell}$ denote the family of graphs that have girth $2\ell+1$ and have no odd holes of length at least $2\ell+3$. 
The graphs in ${\cal G}_2$ are called \textit{pentagraphs}, and the graphs in ${\cal G}_3$ are called \textit{heptagraphs}.
Robertson \cite{NPRZ11} conjectured  that the Petersen graph is the only non-bipartite pentagraph which is $3$-connected and internally $4$-connected. Plummer and Zha \cite{PZ14} presented counterexamples to Robertson's conjecture, and conjectured that every pentagraph is $3$-colorable.
Xu, Yu and Zha \cite{xyz17} proved that every pentagraph is $4$-colorable.
Generalizing the result of \cite{xyz17}, Wu, Xu and Xu \cite{wxx22} proved that graphs in $\bigcup_{\ell\geq 2}{\cal G}_{\ell}$ are $4$-colorable and proposed the following conjecture.
\begin{conjecture} \label{wxx22}{\em{\cite{wxx22}}}
Graphs in $\bigcup_{\ell\geq 2}{\cal G}_{\ell}$ are $3$-colorable.
\end{conjecture}

Recently, Chudnovsky and Seymour \cite{cs22} confirmed that pentagraphs are $3$-colorable.
Wu, Xu and Xu \cite{wxx22+} showed that heptagraphs are $3$-colorable.
Chen \cite{c23} proved that all graphs in $\bigcup_{\ell\geq 5}{\cal G}_{\ell}$ are $3$-colorable.
Later, Wang and Wu \cite{ww24} confirmed Conjecture \ref{wxx22} holds for $\ell=4$, thereby completely resolving the Conjecture \ref{wxx22}.

Let ${\cal H}_{\ell}$ denote the family of graphs that have girth $2{\ell}$ and have no even holes of length at least $2\ell+2$. 
%%The case of odd girth  becomes increasingly difficult as $\ell$ grows. Indeed, longer shortest odd holes allow more parity-compatible induced paths, jumps, and induced $K_4$-subdivisions. So the exclusion of longer odd holes imposes  weaker local restrictions.
It is natural to study the chromatic number of every graph in $\mathcal{H}_{\ell}$. 
Similar to Conjecture \ref{wxx22}, Wu, Xu and Xu proposed the following (see Chen \cite{c25}). 
\begin{conjecture} \label{wxx}
Graphs in $\bigcup_{\ell\geq 2}{\cal H}_{\ell}$ are $3$-colorable.
\end{conjecture}

More recently,  Chen \cite{c25} proved that all graphs in $\bigcup_{\ell\geq 5}{\cal H}_{\ell}$ are $3$-colorable.
In contrast to the case of odd girth, the even girth problem becomes more difficult for small $\ell$.
When $\ell$ is large, the girth condition and the exclusion of longer even holes impose stronger distance constraints on induced theta graphs and induced 
$K_4$-subdivisions. 
However, for small even girth, extremal configurations such as short jumps and tight theta subgraphs may exist simultaneously without immediately creating either a shorter cycle or a forbidden longer even hole, leaving much less structural information.
In this paper, we prove that Conjecture \ref{wxx} holds for $\ell=4$.
\begin{theorem} \label{main theorem}
Graphs in ${\cal H}_4$ are $3$-colorable.
\end{theorem}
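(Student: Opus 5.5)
We argue by minimal counterexample. Let $G\in{\cal H}_4$ be a graph that is not 3-colorable and has $|V(G)|$ minimum. We aim to derive a contradiction in four steps.

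\emph{Step 1: basic reductions.} By minimality, $G$ is connected, has minimum degree at least $3$, and has no clique cutset; since $G$ is triangle-free, this means no cut vertex and no cutset consisting of an edge. The usual argument goes further and shows that $G$ has no $2$-vertex cutset $\{u,v\}$. Split $G$ along such a cut and color the pieces separately. To match colors on $\{u,v\}$ we need both a coloring with $c(u)=c(v)$ and one with $c(u)\neq c(v)$. These are obtained by adding the edge $uv$, or by identifying $u$ with $v$, inside a suitable piece. One checks that neither operation creates a cycle of length less than $8$ or an even hole of length at least $10$, using girth $8$ together with a shortest $(u,v)$-path through the other side. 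So $G$ is $3$-connected. Since $G$ is not bipartite, it contains an odd cycle and hence an odd hole $C$; because the girth is $8$, $|C|\geq 9$.

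\emph{Step 2: local structure around a shortest odd hole.} Fix a shortest odd hole $C$ and analyse how a vertex or path outside $C$ can attach to it. The girth condition forces any vertex outside $C$ to have at most one neighbour on $C$, and any two attachment points of a single outside vertex to be at distance at least $6$ along $C$.

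Now take a \emph{jump}: an induced path $P$ with both ends on $C$ and interior disjoint from $C$. Together with the two arcs of $C$ it forms two cycles whose lengths have opposite parity. The even one must be a hole of length exactly $8$, since girth $8$ rules out anything shorter and ${\cal H}_4$ forbids even holes of length at least $10$. This pins $|P|$ down in terms of the arc lengths.

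From this we classify the possible jumps and conclude that the attachments to $C$ are very restricted. Typically the outcome is that all jumps have a prescribed length and span a prescribed arc, so that $G$ looks like $C$ with ``parallel'' short detours.

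\emph{Step 3: building a coloring or a reducible configuration.} Using $3$-connectivity, there are at least three disjoint paths leaving $C$. Combining jumps as in Step 2 yields theta graphs and induced $K_4$-subdivisions. In each such configuration we compute the cycle lengths: girth $8$, together with the ban on even holes of length at least $10$, forces every even cycle in the configuration to have length exactly $8$. This leads to one of two outcomes. Either
\begin{enumerate}[(a)]
\item $G$ has a vertex of degree at most $2$, or a small cutset, contradicting Step 1; or
\item $G$ has a rigid, explicitly describable structure, which we $3$-color directly by coloring $C$ and extending along jumps with a BFS-layer parity argument.
\end{enumerate}

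\emph{Main obstacle.} The hardest part is the case analysis for $\ell=4$ in Steps 2 and 3. Because the girth is small, short jumps (of length $2$ or $3$) and tight theta subgraphs can coexist without immediately giving a contradiction, so extra configurations have to be ruled out by hand. First I would prove a lemma stating that every theta subgraph of $G$ has all its even cycles of length $8$ and its paths of lengths in a finite list. I would then show that induced $K_4$-subdivisions essentially cannot occur, or occur only in a single pattern. If an unavoidable configuration survives this analysis, the fallback is a structure-based coloring: choose a vertex $v$, color by distance from $v$ modulo $2$, and repair the odd cycles through $C$ using the third color. This relies on the restricted jump structure guaranteeing that the repaired vertices form an independent set.
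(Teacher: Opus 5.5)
There is a genuine gap, in two places.

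\textbf{Step 1 is not valid.} Your reduction across a non-adjacent $2$-cutset $\{u,v\}$ does not work, and the rest of your plan relies on the $3$-connectivity it is supposed to give. For minimality to apply, the modified piece ($uv$ added, or $u$ and $v$ identified) must still have girth at least $8$ and no even hole of length at least $10$. Your ``one checks'' is false. Suppose both sides contain a $(u,v)$-path of length $4$, i.e.\ $u,v$ are antipodal on an $8$-hole crossing the cut. Then adding $uv$ to either piece creates a $5$-cycle, and identifying $u$ with $v$ creates a $4$-cycle, whichever piece you choose. The replacement that does stay in the class is to substitute a shortest $(u,v)$-path for the other side, which gives an induced subgraph of $G$. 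But that path has length at least $2$, so it places no constraint on whether $c(u)=c(v)$, and the two colorings cannot be matched. The paper never needs $3$-connectivity. It only cuts along cliques ($K_1$ or $K_2$), where colorings glue after permuting colors. All the work goes into Theorem~\ref{theorem 1}: every graph in ${\cal H}_4$ has a degree-$2$ vertex or a $K_1$- or $K_2$-cut.

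\textbf{Steps 2--3 are a programme, not an argument.} Phrases like ``typically the outcome is'' and ``this leads to one of two outcomes'', and the parity-plus-repair coloring, come with no mechanism. Nothing forces the attachments into a small cutset or a colorable pattern, and that mechanism is the whole content of the proof. The paper works over an $8$-hole $C$ rather than a shortest odd hole; one exists because the girth is $8$, so any $8$-cycle is induced. Its tools are as follows.
\begin{itemize}
\item Chen's theta lemma splits short jumps over $C$ into type-o and type-e.
\item Lemma~\ref{lemma 1} produces a short or local jump near every $3$-vertex path of $C$. It needs only the absence of degree-$2$ vertices and $K_1$/$K_2$-cuts, not $3$-connectivity.
\item Lemmas on crossing and parallel jumps control how jumps interact.
\end{itemize}
With these, the paper shows that the set $S$ of ends of short jumps satisfies $S\neq\emptyset$ and $|S|\neq 2,3$. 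It then shows that no short jump has ends at distance $2$, $3$ or $4$ on $C$. Each case is closed by a cycle shorter than $8$, an even hole of length at least $10$, or a configuration excluded by the earlier lemmas. Minimum degree $3$ and the absence of $K_2$-cuts are used throughout to produce the links the case analysis needs.

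Your shortest-odd-hole setup does give some constraints. A short jump $P$ forming an $8$-hole with an arc of length $a$ must have $|P|=8-a\ge a$ by minimality of $C$. But you would still need an analogue of Lemma~\ref{lemma 1} and the full case analysis of interacting jumps, and neither is in your proposal.
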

In fact, we prove the following structural characterization, which implies Theorem \ref{main theorem}.
\begin{theorem} \label{theorem 1}
All graphs in ${\cal H}_4$ have either a degree-$2$ vertex or a $K_1$-cut or $K_2$-cut.
\end{theorem}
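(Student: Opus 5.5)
We argue by contradiction. Let $G\in{\cal H}_4$ be a counterexample: minimum degree at least $3$, $2$-connected, and no $K_2$-cut. Since $G$ has girth $8$ and is not a forest, it contains an $8$-hole; fix one, $C=c_0c_1\cdots c_7c_0$. The overall plan is to show that any attachment to $C$ forces either a cycle shorter than $8$ or an even hole of length at least $10$, and then push this into a contradiction with $2$-connectivity and $\delta\ge 3$. The two forbidden objects will be detected in two standard graphs: theta graphs (two vertices joined by three internally disjoint induced paths) and subdivisions of $K_4$. In both, the cycles formed by pairs of paths have lengths whose parities are related, so the hypotheses constrain the path lengths very strongly.

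\textbf{Step 1 (theta graphs).} Suppose $P_1,P_2,P_3$ are internally disjoint $(u,v)$-paths whose union is induced, with lengths $a\le b\le c$. Girth $8$ gives $a+b\ge 8$. Among the three pairs $\{a,b\},\{a,c\},\{b,c\}$, at least one has even sum, and that cycle must have length exactly $8$. This will leave a short explicit list of length triples. For example, when all three lengths have equal parity every pair sums to $8$, so $a=b=c=4$. In the mixed-parity case exactly one pair has even sum, and it must sum to $8$.

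\textbf{Step 2 (ears on $C$).} Consider a shortest path $Q$ with both ends on $C$, no interior vertex on $C$, and internally disjoint from $C$. Such a $Q$ exists because $G$ is $2$-connected and $\delta\ge3$ means $G\neq C$. The ends of $Q$ cut $C$ into two arcs, and $C\cup Q$ is a theta graph, so Step 1 applies. Girth $8$ forces the ends of $Q$ to be far apart along $C$ relative to $|Q|$. Combining this with the ban on even holes of length $\ge10$, I expect to show that every such ear joins antipodal vertices $c_i,c_{i+4}$ and has length $4$. Chords and short jumps are ruled out by girth.

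\textbf{Step 3 (structure of the bridges).} Next study all ears and bridges of $C$ together. Two ears with interleaving ends produce a $K_4$-subdivision. Its seven cycles have lengths expressible through the segment lengths, and a parity count shows that one of them is an even hole longer than $8$, or else a cycle shorter than $8$ appears. So the antipodal ears must be non-crossing. I then intend to show that every bridge of $C$ attaches at exactly the two ends of one antipodal pair $\{c_i,c_{i+4}\}$.

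\textbf{Step 4 (finding the cut).} To reach a contradiction, choose $C$ and a bridge $B$ so that $B$ is extremal, for instance minimal in size. Then $\{c_i,c_{i+4}\}$ separates $B$ from the rest of the graph. Since $c_i$ and $c_{i+4}$ are non-adjacent, this is not yet a $K_2$-cut, so we recurse inside $B\cup\{c_i,c_{i+4}\}$ plus an $8$-path. Minimality should produce a degree-$2$ vertex in $B$ or a genuine $K_1$/$K_2$-cut, contradicting the choice of $G$.

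\textbf{Main obstacle.} I expect Step 3, the global analysis of how bridges interact, to be hardest. In particular, excluding bridges with three or more attachments requires exhaustive case analysis of $K_4$-subdivisions and prisms, where even-hole constraints at length exactly $8$ are tight. I would first try to handle it by a parity lemma on $K_4$-subdivisions analogous to Step 1.
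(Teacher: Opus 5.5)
There is a genuine gap, and it already occurs at Step 2, before the step you flagged. Step 1 is correct. Applied to $C\cup Q$ with arcs of lengths $a_1+a_2=8$ (so $a_1\equiv a_2 \pmod 2$), it gives only this: either $|Q|=a_1=a_2=4$, or $|Q|\not\equiv a_1 \pmod 2$, both cycles through $Q$ are odd, and $|Q|\ge 9-\min\{a_1,a_2\}$. The second alternative is fully compatible with ${\cal H}_4$. For example, an $8$-cycle plus a path of length $7$ joining $c_0,c_2$ has cycle lengths $8,9,13$. An $8$-cycle plus a path of length $5$ joining $c_0,c_4$ has cycle lengths $8,9,9$. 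Both have girth $8$ and no even hole longer than $8$. These are the paper's type-o short jumps: girth bounds their length from below but does not exclude them. So \emph{every ear is antipodal of length $4$} cannot come out of a local parity analysis. It can only come from $\delta\ge 3$ and the absence of $K_1$/$K_2$-cuts, and proving it is essentially the whole theorem.

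That is what the paper's effort goes into. First, Chen's lemma: every $3$-vertex subpath $xyz$ of $C$ forces a short jump at $x$ or $z$, a local $(x,z)$-jump across $y$, or a short jump or local jump across one vertex at $y$. This is where the degree and cut hypotheses enter. Second, the restrictions on pairs of jumps: all type-e short jumps have the same ends, two short jumps (type-o when parallel) must have ends that are adjacent or nearly so on $C$, and a type-o short jump parallel to a local jump across one vertex must share an end with it. Third, the analysis of the set $S$ of ends of short jumps, showing $S\neq\emptyset$, $|S|\notin\{2,3\}$, and that no short jump has $d_C(s,t)\in\{2,3,4\}$, which is absurd.

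Step 3 has the same problem in stronger form. That every bridge attaches at a single antipodal pair is asserted rather than derived. A parity count on a $K_4$-subdivision only handles two disjoint, mutually anticomplete ears. It says nothing about bridges whose interior vertices see interior vertices of the arcs, which are the paper's local jumps and links. Almost all of the paper's case analysis concerns these, using links forced by $d(v)\ge 3$ at each $v\in V(C)$ and the fact that no edge of $C$ is a $K_2$-cut.

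Step 4 is unnecessary, which suggests the endgame was not thought through. Any two antipodal pairs of an $8$-cycle interleave. So if Step 3 held, all bridges would attach at one pair $\{c_i,c_{i+4}\}$, and $c_{i+1}$ would already have degree $2$. The recursion in which \emph{minimality should produce} a degree-$2$ vertex or a cut is not an argument. What is missing is a proof of Steps 2 and 3, i.e.\ the actual content of the theorem.
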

\noindent\textit{Proof of Theorem \ref{main theorem}.} 
Suppose that Theorem \ref{main theorem} is false, and let $G\in\mathcal H_\ell$ be a minimal counterexample. 
Then  $d(G)\ge 3$. 
By Theorem \ref{theorem 1}, $G$ has a $K_1$-cut or a $K_2$-cut. Consequently, $G$ can be decomposed into two induced subgraphs $G_1$ and $G_2$ whose intersection is either a vertex or an edge.
By the minimality of $G$, both $G_1$ and $G_2$ are 3-colorable, and therefore their colorings can be combined to obtain a 3-coloring of $G$, contradicting the assumption that $G$ is a counterexample.
\qed

The organization of this paper is as follows. 
In Section 2, we present  some notations and preliminary lemmas.
In Section 3, we  complete the proof of Theorem \ref{theorem 1}.

\section{Preliminary}

In this section, we collect some useful lemmas. 
The author of \cite{c25} proved the following lemma .

Let $G$ be a graph.
We say that a path  $P$ of $G$ is an ear of $G$ if all vertices in $P^*$ have degree-$2$ in $G$ while both ends of $P$ have degree at least $3$. 
A theta graph is a graph that consists of a pair of distinct vertices joined by three internally disjoint paths.
%Let $C$ be a hole of a graph $G$.
%A path $P$ of $G$ is a chordal path of $C$ if $C\cup P$ is an induced theta-subgraph of $G$.

\begin{lemma} \label{lemma 0}{\em \cite{c25}}
Let $\ell\geq 2$ be an integer and $H$ be an induced theta subgraph of a graph $G\in {\cal H}_{\ell}$.
\begin{enumerate}[(1)]
\item When all cycles in $H$ are even, either all ears of $H$ have length $\ell$ or exactly one ear of $H$ has length one.
\item When some cycle in $H$ is odd, the length of the ear of $H$ shared by its odd cycles is one or larger than $\max\{\ell, |P|, |P'|\}$, where $P$, $P'$ are the ears of $H$ shared by cycles having different parity.
\end{enumerate}
\end{lemma}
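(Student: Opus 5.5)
Throughout, let $u,v$ be the two branch vertices of $H$ and let $P_1,P_2,P_3$ be its three ears, of lengths $a,b,c$. The three cycles of $H$ are $C_{12}=P_1\cup P_2$, $C_{13}=P_1\cup P_3$ and $C_{23}=P_2\cup P_3$, of lengths $a+b$, $a+c$ and $b+c$. The plan is to combine two facts.
\begin{enumerate}[(i)]
\item Every cycle of $G$ has length at least $2\ell$, by the girth condition.
\item A cycle of $H$ that is a hole of $G$ and is even has length exactly $2\ell$, since $G\in{\cal H}_\ell$.
\end{enumerate}
The only subtlety is deciding which cycles of $H$ are holes. Since $H$ is induced, $C_{12}$ is an induced cycle unless $P_3$ is the single edge $uv$, in which case $uv$ is a chord. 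The same holds for the other two cycles. Since $G$ is simple, at most one ear has length one. So if no ear has length one, all three cycles are holes. If, say, $c=1$, then $C_{13}$ and $C_{23}$ are holes and $C_{12}$ is not.

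For (1), suppose all three cycles are even and no ear has length one. Then all three cycles are even holes, so by (ii)
\[ a+b=a+c=b+c=2\ell, \]
which gives $a=b=c=\ell$. Otherwise some ear has length one, and it is the unique such ear. This is exactly the dichotomy claimed.

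For (2), note that the three cycle lengths $a+b$, $a+c$, $b+c$ sum to an even number. Hence if some cycle is odd, exactly two cycles are odd. Equivalently, one ear, say $P_3$, has parity different from the other two ears $P_1,P_2$, which share a parity. Then $P_3$ is the ear shared by the two odd cycles $C_{13}$ and $C_{23}$. Moreover $P=P_1$ and $P'=P_2$ are the ears shared by cycles of different parity: $P_1$ lies in the even cycle $C_{12}$ and the odd cycle $C_{13}$, and similarly for $P_2$. If $c=1$ we are done.

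Otherwise $P_3$ is not the edge $uv$, so $C_{12}$ is an even hole and (ii) gives $a+b=2\ell$. Both $C_{13}$ and $C_{23}$ are cycles, whether or not they are holes, so (i) gives $a+c\ge 2\ell=a+b$ and $b+c\ge 2\ell=a+b$. These yield $c\ge b$ and $c\ge a$. Since $c$ has parity different from $a$ and $b$, the inequalities are strict: $c>\max\{a,b\}$. Finally, $a+b=2\ell$ forces $\max\{a,b\}\ge\ell$, so $c>\ell$ as well, giving $c>\max\{\ell,|P|,|P'|\}$.

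There is no real obstacle here. The one point needing care is the case where $P_1$ or $P_2$ has length one. Then $C_{13}$ or $C_{23}$ is not a hole, but the argument only applies the girth bound (i) to those cycles. It applies (ii) only to $C_{12}$, which is a hole whenever $c\neq 1$.
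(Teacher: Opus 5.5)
Your proof is correct and complete. The girth bound makes every cycle of $H$ have length at least $2\ell$, and membership in ${\cal H}_\ell$ then forces every even hole to have length exactly $2\ell$. You also handle the one delicate point properly: a cycle of $H$ fails to be a hole only when the remaining ear is the single edge $uv$, and at most one ear can be an edge.

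The paper gives no proof of this lemma; it quotes it from Chen's paper, so there is no in-paper argument to compare against. Your direct parity-and-length count is a self-contained verification. It also yields slightly more than stated: in case (2), when the shared ear is not an edge, the even cycle is a $2\ell$-hole, so $|P|+|P'|=2\ell$.
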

From the above lemma, it can be inferred that if all three paths of the theta graph are at least of length $2$, then this theta graph induces three even holes or two odd holes and one even hole.

Let $C$ be an even hole of a graph $G$ and 
let $P$ be an induced $(s, t)$-path such that $V(C)\cap V(P^*)=\emptyset$, $s, t\in V(C)$.
Then we call $P$ a jump or an $(s, t)$-jump over $C$.
When $s, t$ are adjacent and $C\backslash \{s, t\}$ is anticomplete to $P^*$, 
we refer to such $P$ as $(s, t)$-link over $C$.
%(we also write $(s, t)$-link if there is no confusion).
When $s, t$  are not adjacent,
let $Q_1, Q_2$ be the internally disjoint $(s, t)$-paths of $C$.
When no vertex in $V(Q^*_1\cup Q^*_2)$ has  a neighbor in $V(P^*)$, we say that $P$ is a short jump over $C$. 
If some vertex in $V(Q^*_1)$ has a neighbor in $V(P^*)$ and no vertex in $V(Q^*_2)$ has a neighbor in $V(P^*)$, we say that $P$ is a local jump over $C$ across $Q^*_1$.
In particular, when $|V(Q^*_1)|=1$, we say that $P$ is a local jump over $C$ across one vertex.
For the sake of clarity, below we will denote the local $(s, t)$-jump across $c$ as $s-c-t$ local jump.
When $|V(Q^*_1)| \neq 1$, $|V(Q^*_2)| \neq 1$, 
and the only neighbor of $s$ in $V(Q_1)$ is denoted by $c$, which has a neighbor in $V(P^*)$, and no vertex in $V(Q^*_2)$ has a neighbor in $V(P^*)$, then we denote such $P$ as an $s-c-t$ link over $C$.
If no confusion arises, we simply omit ``over $C$''.

\begin{lemma}\label{lemma 1}  {\em \cite{c25}}
Let $C$ be a hole of a graph $G$ such that no vertex outside $C$ has two neighbors in $C$.
Assume that $G$ has neither a degree-$2$ vertex nor $K_1$-cut nor $K_2$-cut.
For any $3$-vertex path $xyz$ on $C$, one of the following holds.
\begin{enumerate}[(1)]
    \item Either $x$ or $z$ is one end of a short jump over $C$.
    \item There is a local $(x, z)$-jump over $C$ across $y$.
    \item The vertex $y$ is an end of a short jump or a local jump over $C$ across one vertex.
\end{enumerate}
\end{lemma}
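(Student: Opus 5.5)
The plan is to look at the component of $G\setminus V(C)$ that contains an outside neighbour of $y$, and then extract a suitable induced path from a shortest path inside that component. Since $G$ has no degree-$2$ vertex, $y$ has a neighbour $w\notin V(C)$. Let $D$ be the component of $G\setminus V(C)$ containing $w$. Let $A\subseteq V(C)$ be the set of vertices of $C$ with a neighbour in $D$. The hypothesis that no vertex outside $C$ has two neighbours in $C$ will be used repeatedly: it guarantees that each vertex of $D$ "sees" at most one vertex of $C$. This is what makes minimal paths through $D$ automatically induced jumps with tightly controlled attachments.

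First I will show that $A\not\subseteq\{x,y,z\}$ unless $\{x,z\}\subseteq A$. If $A=\{y\}$, then $y$ is a $K_1$-cut separating $D$ from $C-y$. If $A=\{x,y\}$ or $A=\{y,z\}$, then the edge $xy$ or $yz$ is a $K_2$-cut separating $D$ from the rest of $C$, which is nonempty because $|C|\ge 4$. Both contradict the hypotheses. So either $\{x,y,z\}\subseteq A$, or $A$ contains a vertex $t\in V(C)\setminus\{x,y,z\}$.

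\emph{Case A: $A$ contains some $t\notin\{x,y,z\}$.} Take a shortest path $P$ from $y$ to $V(C)\setminus\{x,y,z\}$ whose interior lies in $D$. By minimality, $P$ is induced. Moreover, its interior vertices have no neighbours on $C$ other than $x$, $z$, the end $t$, and (only for the first interior vertex) $y$. There are three subcases.
\begin{itemize}
\item If no interior vertex sees $x$ or $z$, then $P$ is a short jump at $y$, giving (3). Here one must check that $t$ is not adjacent to $y$; this holds because the neighbours of $y$ on $C$ are $x$ and $z$.
\item If interior vertices see $x$ but not $z$ (or symmetrically), take the last interior vertex adjacent to $x$ and the subpath from it to $t$. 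Prefixing $x$ gives an induced $(x,t)$-path whose interior meets $C$ only at its ends, i.e. a short jump at $x$, giving (1). If instead $t$ is adjacent to $x$, I will fall back on the configuration $y$--$x$--$t$, which is a local jump at $y$ across one vertex and again gives (3).
\item If interior vertices see both $x$ and $z$, take the subpath of $P$ between an $x$-neighbour and a $z$-neighbour, choosing these as close together as possible, and attach $x$ and $z$. This yields an induced $(x,z)$-jump. Its interior can meet $C$ only at $y$, so it is either a short jump at $x$, giving (1), or a local $(x,z)$-jump across $y$, giving (2).
\end{itemize}

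\emph{Case B: $\{x,y,z\}\subseteq A$.} Take a shortest $(x,z)$-path with interior in $D$. It is an induced $(x,z)$-jump whose interior sees no vertex of $C$ except possibly $y$. Hence it is a short jump, giving (1), or a local jump across $y$, giving (2).

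The main obstacle I expect is the degenerate bookkeeping rather than the overall idea. The delicate points are short holes, where $|C|=4$ and $t$ is adjacent to both $x$ and $z$, and the cases where the end $t$ is adjacent to $x$ or $z$. In these situations the extracted path is a link rather than a jump in the paper's sense. There I will need to re-choose the path, for instance by replacing $y$ with the neighbour of $t$ used in the definition of a local jump across one vertex, so that the output matches (1) or (3) exactly. I will first try to handle these by a minimal-counterexample choice of $P$ (shortest overall among all candidate paths) so that the endpoints are forced into the right positions.
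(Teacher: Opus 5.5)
Your argument is correct and complete. The paper does not prove this lemma itself; it imports it from \cite{c25}, so there is no in-paper proof to compare your route against.

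Your closing paragraph overestimates the remaining work: no re-choosing of paths is needed, including when $|C|=4$.

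\begin{itemize}
\item \emph{Second subcase.} By the one-neighbour hypothesis and the minimality of $P$, $p_1$ sees only $y$ on $C$, and the last interior vertex $p_k$ sees only $t$. Hence the last $x$-neighbour $p_j$ satisfies $2\le j\le k-1$. The only degenerate situation is therefore that $t$ is the other $C$-neighbour of $x$. In that case your fallback is already a full proof: $P^*$ misses $z$ and meets $C\setminus\{x,y,z\}$ only at $t$, so $P$ itself is a local $(y,t)$-jump across $x$.
\item \emph{Third subcase and Case B.} The extracted path has ends $x$ and $z$, which are non-adjacent on a hole, so no link can arise. In the third subcase the chosen subpath also avoids $p_1$, so it is always a short jump.
\end{itemize}
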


Let $P$ be a short $(s, t)$-jump over $C$, and $Q_1, Q_2$ be the $(s, t)$-paths of $C$. Note that $C\cup P$ is an induced theta subgraph of $G$  whose ears at least $2$, and both $PQ_1$ and $PQ_2$ have the same parity as $C$ is even. When $PQ_1$ is odd, we say that $P$ is of \textit{type-o};
and when $PQ_1$ is even, we say that $P$ is of \textit{type-e}.
By Lemma \ref{lemma 0} (2), we have $|P|>\ell$ when $P$ is of type-o.
When $P$ is of type-e, it follows from Lemma \ref{lemma 0} (1) that $|P|=|Q_1|=|Q_2|=\ell$.

Let $C$ be an even hole and $P_i$ be an $(u_i, v_i)$-jump over $C$ for each integer $1\leq i\leq 2$.
If $u_1,v_1,u_2,v_2$ are distinct vertices and $u_1,u_2,v_1,v_2$ appear on $C$ in this order, then we say that $P_1, P_2$ are \textit{crossing}; Otherwise , they are \textit{parallel}. 
Note that by our definition, $P_1, P_2$ are parallel when they share an end. 

\begin{lemma}\label{c4.1}  {\em \cite{c25}}
Let $\ell\geq 3$ be an integer and $C$ be an even hole of a graph $G\in {\cal H}_\ell$.
Then all short jumps over $C$ of type-e have the same ends.
\end{lemma}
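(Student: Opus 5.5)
The plan is to argue by contradiction: assume two type-e short jumps over $C$ have different end pairs. First pin down the geometry. If $P$ is a short $(s,t)$-jump of type-e, then as noted after Lemma \ref{lemma 0} we have $|P|=|Q_1|=|Q_2|=\ell$. Hence $|C|=2\ell$ and $s,t$ are antipodal on $C$. So let $P_1$ be an $(s_1,t_1)$-jump and $P_2$ an $(s_2,t_2)$-jump, both of type-e, with $\{s_1,t_1\}\neq\{s_2,t_2\}$. Label $C$ cyclically by $0,1,\dots,2\ell-1$ with $s_1=0$ and $t_1=\ell$. Two distinct antipodal pairs share no vertex, so $s_2=a$ and $t_2=a+\ell$ for some $1\le a\le \ell-1$, and the two jumps are crossing. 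By symmetry (reflecting $C$, i.e.\ replacing $a$ by $\ell-a$) we may assume $a\le \ell-2$. This is possible since $\ell\ge 3$: at least one of $a,\ell-a$ is at most $\ell-2$.

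\textbf{Case 1: $P_1^*$ and $P_2^*$ are disjoint and anticomplete.} Form the cycle
\[ D \;=\; s_1P_1t_1 \cup C[\ell,\ell+a] \cup t_2P_2s_2 \cup C[a,0]. \]
Its length is $\ell+a+\ell+a=2\ell+2a$, which is even and greater than $2\ell$. We check that $D$ is induced:
\begin{itemize}
\item Neither $P_i^*$ has a neighbour in $C$ other than its own ends, because both jumps are short.
\item $P_1^*$ and $P_2^*$ are anticomplete by assumption.
\item The two arcs $C[0,a]$ and $C[\ell,\ell+a]$ are disjoint and nonadjacent on the hole $C$. Indeed, $a<\ell-1$ and $\ell+a<2\ell-1$.
\end{itemize}
So $D$ is an even hole of length at least $2\ell+2$, contradicting $G\in{\cal H}_\ell$.

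\textbf{Case 2: $P_1^*$ and $P_2^*$ meet or are joined by an edge.} Walk along $P_1$ from $s_1$ and let $x$ be the first vertex that lies in $P_2^*$ or has a neighbour there. Likewise walk from $t_1$ and let $x'$ be the first such vertex. From these we extract paths inside $P_1\cup P_2$ (plus at most one connecting edge) joining $s_1$ to each of $s_2,t_2$, and $t_1$ to each of $s_2,t_2$. Closing each of them with the appropriate arc of $C$ gives cycles through the four arcs of lengths $a$, $\ell-a$, $a$, $\ell-a$, whose total is $2\ell$. Every such cycle has length at least $2\ell$ by the girth condition. Summing the four inequalities, each subpath of $P_1$ and $P_2$ is counted at most twice and at most four connecting edges occur. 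This gives roughly $2(|P_1|+|P_2|)+4+2\ell \ge 8\ell$, that is $6\ell+4\ge 8\ell$, so $\ell\le 2$, a contradiction.

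\textbf{Main obstacle.} I expect Case 2 to be the delicate step. The combinatorics of how $P_1$ and $P_2$ interleave must be controlled so that the chosen closed walks are genuine cycles and the double-counting bound is valid. I would handle this by choosing $x$ and $x'$ extremally and arguing on the minimal subpath of $P_2$ between their attachments. If the counting is tight in some subcase, the fallback is to use one of the resulting cycles that avoids a chord. That cycle is then an induced theta subgraph with $C$, and Lemma \ref{lemma 0} applied to it should give the contradiction.
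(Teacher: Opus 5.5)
Your proposal is correct. The paper does not prove this lemma itself; it quotes it from Chen. So there is no internal argument to compare against, and your proof has to stand on its own, which it does.

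\textbf{Reduction and Case~1.} The reduction to two crossing jumps between antipodal pairs $\{0,\ell\}$ and $\{a,a+\ell\}$ is correct, and so is Case~1. The assumption $a\le \ell-2$ is exactly what excludes the two edges of $C$ that could join the arcs $C[0,a]$ and $C[\ell,\ell+a]$.

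\textbf{Case~2.} Your hedging and the fallback to Lemma~\ref{lemma 0} are unnecessary, because the count you sketch is exact. Put $\alpha=|s_1P_1x|$ and $\beta=|x'P_1t_1|$. Let $\epsilon,\epsilon'\in\{0,1\}$ record whether a connecting edge $xy$, respectively $x'y'$, is used. Let $p=|yP_2s_2|$ and $q=|y'P_2s_2|$, so that $|yP_2t_2|=\ell-p$ and $|y'P_2t_2|=\ell-q$. The four cycles and the girth give
\[
\alpha+\epsilon+p+a\ge 2\ell,\quad \alpha+\epsilon+(\ell-p)+(\ell-a)\ge 2\ell,\quad \beta+\epsilon'+q+(\ell-a)\ge 2\ell,\quad \beta+\epsilon'+(\ell-q)+a\ge 2\ell .
\]
Adding the first two gives $\alpha+\epsilon\ge\ell$. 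Adding the last two gives $\beta+\epsilon'\ge\ell$. Since $x$ is the first and $x'$ the last vertex of $P_1$ touching $P_2^*$, the subpaths $s_1P_1x$ and $x'P_1t_1$ are edge-disjoint, so $\alpha+\beta\le\ell$. Hence $\epsilon+\epsilon'\ge\ell\ge 3$, which is impossible.

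The four closed walks are genuine cycles, for three reasons:
\begin{itemize}
\item By the choice of $x$, no vertex of $s_1P_1x$ other than $x$ lies in $P_2^*$.
\item No vertex of $P_1$ lies in $\{s_2,t_2\}$, because $V(P_1)\cap V(C)=\{s_1,t_1\}$.
\item The interiors of the four arcs avoid $P_1\cup P_2$, because $P_i^*\cap V(C)=\emptyset$.
\end{itemize}
The same holds symmetrically for $x'P_1t_1$. No inducedness is needed here, only the girth bound.

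A small bonus of your route: Case~2 uses only the girth, and Case~1 only the shortness of the jumps plus the exclusion of even holes longer than $2\ell$. Consistently with the statement, nothing about degree-$2$ vertices or $K_1$/$K_2$-cuts is required. You may want to write Case~2 in the exact form above in place of ``roughly''.
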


\begin{lemma}\label{lemma 2}  {\em \cite{c25}}
Let $l\geq 4$ be an integer and $C$ be an even hole of a graph $G\in {\cal H}_\ell$.
For any integer $1\leq i\leq 2$, let $P_i$ be a short  $(u_i, v_i)$-jump over $C$ such that $P_1$ and $P_2$ are of type-o when they are parallel.
If $G$ has neither a degree-$2$ vertex nor a $K_1$-cut nor $K_2$-cut,
then one of the following holds.
\begin{enumerate}[(1)]
    \item 
    When $u_1=u_2$, we have $v_1v_2\in E(G)$.
    \item 
    When $|\{u_1, u_2, v_1, v_2\}|=4$, either $u_1u_2, v_1v_2\in E(G)$ or
    $u_iu_{3-i}v_i$ is a path on $C$ for some integer $1\leq i\leq 2$.
\end{enumerate}
\end{lemma}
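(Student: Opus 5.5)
Lemma~\ref{lemma 2} is quoted from \cite{c25}, so here I only outline how I would reprove it from Lemma~\ref{lemma 0}, Lemma~\ref{c4.1} and the definitions of short jumps.

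\textbf{Setup.} Write $n=|C|$. Each $P_i$ is short, so $C\cup P_i$ is an induced theta graph all of whose ears have length at least $2$. By the remark after Lemma~\ref{lemma 0}:
\begin{itemize}
\item a type-o jump satisfies $|P_i|>\ell\ge 4$;
\item a type-e jump has $|P_i|$ and both arcs of $C$ equal to $\ell$.
\end{itemize}
By Lemma~\ref{c4.1}, two type-e jumps have the same ends. So if the jumps are parallel they are type-o by hypothesis, and if they cross at most one of them can be type-e.

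\textbf{Combining the two jumps.} Look at $C\cup P_1\cup P_2$ and split by the adjacency between $P_1^*$ and $P_2^*$.
\begin{itemize}
\item If $P_1^*$ and $P_2^*$ are disjoint and anticomplete, then $C\cup P_1\cup P_2$ is an induced subdivision of a graph with four branch vertices. Every cycle in it avoiding one arc of $C$ is a hole.
\item Otherwise, take a minimal connecting subpath between $P_1^*$ and $P_2^*$. Together with parts of $C$, this gives new induced theta subgraphs or new jumps over $C$.
\end{itemize}

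\textbf{Case $u_1=u_2$.} Let $R$ be the $(v_1,v_2)$-arc of $C$ avoiding $u_1$.
\begin{itemize}
\item If $P_1^*,P_2^*$ are anticomplete, then $P_1\cup P_2$ is a $(v_1,v_2)$-jump over $C$ that is again short. It, or a shortcut of it through the cut vertex $u_1$, forms a theta with $C$.
\item Apply Lemma~\ref{lemma 0}(2) to the theta formed by $P_1$, $P_2$ and the two arcs of $C$ from $u_1$.
\item Both $P_i$ have length greater than $\ell$ and the cycle $P_1\cup P_2\cup R$ is a hole. Lemma~\ref{lemma 0} then forces the middle ear $R$ to have length one, i.e. $v_1v_2\in E(G)$. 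Otherwise this hole or the companion hole through the other arc is even and of length at least $2\ell+2$, which is excluded.
\item If there are edges between $P_1^*$ and $P_2^*$, I take the edge closest to $v_1,v_2$ and obtain a shorter induced theta. Its ear parities contradict Lemma~\ref{lemma 0} unless $v_1v_2\in E(G)$. The girth bound $2\ell$ excludes short cycles near $u_1$.
\end{itemize}

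\textbf{Case of four distinct ends.} This is the main obstacle.
\begin{itemize}
\item \emph{Parallel jumps.} Both are type-o, each of length at least $\ell+1$. The cycle $P_1\cup P_2\cup$(the two arcs of $C$ between them) is long. Parity bookkeeping with Lemma~\ref{lemma 0}(2), applied to the thetas formed with each arc, forces the connecting arcs to be single edges, giving $u_1u_2,v_1v_2\in E(G)$. Otherwise there is an even hole longer than $2\ell$.
\item \emph{Crossing jumps.} The structure is a $K_4$-subdivision. There are four arcs $a_1,\dots,a_4$ of $C$ between consecutive ends, and the holes use two jumps and two opposite arcs. Writing parity equations for the holes, the constraint that every even hole has length exactly $2\ell$ should leave only the case where some arc between $u_i$ and $u_{3-i}$ and some arc between $u_{3-i}$ and $v_i$ both have length one, giving the path $u_iu_{3-i}v_i$ on $C$.
\item \emph{Degenerate configurations.} Some configurations would give a vertex of degree $2$ or a $K_2$-cut (for example, short arcs whose internal vertices have no other neighbours). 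I would exclude these using the hypothesis that $G$ has neither a degree-$2$ vertex nor a $K_1$-cut nor a $K_2$-cut, possibly together with Lemma~\ref{lemma 1}, which supplies further jumps.
\end{itemize}
I expect the crossing case with edges between $P_1^*$ and $P_2^*$ to need the most casework. There I would reroute through the first such edge to obtain a new short jump, then recurse using the parallel case.
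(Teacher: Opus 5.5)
Your proposal has a genuine gap. Everything you actually argue is local: you apply Lemma~\ref{lemma 0} and the girth bound to subgraphs of $C\cup P_1\cup P_2$. The hypothesis that $G$ has no degree-$2$ vertex and no $K_1$- or $K_2$-cut is reserved for vaguely described ``degenerate configurations''. But the statement is false for the local configuration alone, already in your main cases (the paper only quotes the lemma from Chen, so this global part is exactly what must be supplied).

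\emph{Counterexample to (1).} Take $\ell=4$, $C=c_0c_1\cdots c_7c_0$, and attach a subdivided claw with centre $x$ and legs of lengths $5,2,2$ ending at $c_0,c_2,c_6$. This is an induced $K_4$-subdivision whose seven cycles have lengths $8,8,8,9,9,13,13$, so it lies in ${\cal H}_4$. The $(c_0,c_2)$- and $(c_0,c_6)$-paths through $x$ are short type-o jumps of length $7$ with common end $c_0$, yet $c_2c_6\notin E$.

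\emph{Counterexample to the crossing case of (2).} Add to $C$ a $(c_0,c_4)$-path of length $5$ and a disjoint, anticomplete $(c_2,c_6)$-path of length $4$. Again the cycle lengths are $8,8,8,9,9,13,13$. The first jump is type-o and the second type-e, they cross, and no two of $c_0,c_2,c_4,c_6$ are adjacent, so neither alternative of (2) holds.

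Hence neither parity bookkeeping on a $K_4$-subdivision nor rerouting through the first edge between $P_1^*$ and $P_2^*$ followed by Lemma~\ref{lemma 0} can give the conclusion. The core of the proof has to be a global argument. It must use the extra neighbours guaranteed by the absence of degree-$2$ vertices, and the absence of $K_1$/$K_2$-cuts, to attach further paths to $C\cup P_1\cup P_2$ and reach a contradiction. Your proposal contains nothing of that kind.

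Several of your local steps are also misdirected.
\begin{itemize}
\item When $u_1=u_2$, $P_1\cup P_2$ is not a jump over $C$, since it passes through $u_1\in V(C)$.
\item Suppose $P_1^*,P_2^*$ are disjoint and anticomplete, and let $A_i$ be the arc of $C$ from $u_1$ to $v_i$ avoiding $v_{3-i}$. Type-o gives $|P_1|+|P_2|+|R|\equiv |A_1|+|A_2|+|R|=2\ell\equiv 0$. So $P_1\cup P_2\cup R$ is an even hole longer than $2\ell$ \emph{whatever} $|R|$ is. This subcase is impossible outright; it does not ``force'' $|R|=1$.
\item The same computation rules out the anticomplete parallel four-end case, even when both connecting arcs are single edges.
\end{itemize}
So the adjacencies asserted by the lemma can only arise when $P_1$ and $P_2$ meet, in tripod- or prism-like configurations. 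That is exactly where your sketch merely asserts the outcome, and the tripod example shows that for (1) this assertion cannot be established locally at all.
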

From this, it can be seen that when $P$ is of type-o and $Q$ is of type-e, if they  cross each other, they satisfy Lemma \ref{lemma 2} (2).
However, if they are parallel, they do not necessarily satisfy Lemma \ref{lemma 2}.

\begin{lemma}\label{lemma 3}  {\em \cite{c25}}
Let $l\geq 4$ be an integer and $C$ be an even hole of a graph $G\in {\cal H}_\ell$.
Let $P_1$ be a local $(u_1, v_1)$-jump over $C$ across one vertex $s_1$.
Let $P_2$ be a short or local $(u_2, v_2)$-jump over $C$.
Assume that $P_1$, $P_2$ are parallel and $G$ has neither a degree-$2$ vertex nor a $K_1$-cut nor $K_2$-cut.
Then one of the following holds.
\begin{enumerate}[(1)]
    \item 
    When $P_2$ is local and across one vertex $s_2$, if $P_1$, $P_2$ do not share an end, then there is a short $(s_i, w_j)$-jump over $C$,
    where $\{i,j\}=\{1,2\}$ and $w_j\in \{u_j, v_j, s_j\}$.
    \item 
    When $P_2$ is short and of type-o, $P_1$ and $P_2$ share an end.
\end{enumerate}
\end{lemma}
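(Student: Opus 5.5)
We work entirely with the hole $C$ and the two paths. The plan is to locate, in each case, an induced theta subgraph or a hole that contradicts Lemma \ref{lemma 0}. If that fails, we instead produce a vertex set of size at most two separating some degree-$\geq 3$ structure, contradicting the assumption that $G$ has no degree-$2$ vertex and no $K_1$- or $K_2$-cut.

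Write $Q$ for the $(u_1,v_1)$-path of $C$ through $s_1$, so $Q=u_1s_1v_1$. Let $R$ be the other $(u_1,v_1)$-path of $C$. Since $P_1$ is local across one vertex, no vertex of $R^*$ has a neighbour in $P_1^*$, while $s_1$ does. First we record the structure of $P_1\cup C$. Choose a neighbour $x$ of $s_1$ on $P_1^*$ closest to $u_1$ and a neighbour $y$ closest to $v_1$. The subpaths $u_1P_1x$ and $yP_1v_1$, together with $s_1$, give short paths. By girth $8$ these forbid small cycles, so $x,y$ lie at distance at least $3$ from the ends of $P_1$. Consider the holes formed by $u_1P_1x\,s_1$ with $R$, and by $s_1\,yP_1v_1$ with $R$, and the theta subgraphs they form. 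Lemma \ref{lemma 0} then pins down parities. In particular, $u_1P_1xs_1$ is a short $(u_1,s_1)$-path playing the role of a jump over a modified hole $C'$, obtained from $C$ by rerouting through part of $P_1$. This reduction lets us treat a local jump across one vertex as a short jump over a suitable even hole, which is what makes Lemma \ref{lemma 2} and Lemma \ref{c4.1} applicable.

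For (2), $P_2$ is short of type-o and parallel to $P_1$. Suppose they share no end. Parallelism puts $u_2,v_2$ both on $R$, or one on $R$ with the other equal to... Since $P_2$ is short it avoids $s_1$'s neighbourhood, so $s_1\notin P_2^*$ and $s_1\notin\{u_2,v_2\}$, but $u_2,v_2\in V(R)$. Replace $C$ by the hole $C''=u_1P_1v_1\cup R$ (or the half-hole through $x$ or $y$ if $P_1$ has chords to $s_1$ only). Then $P_2$ is still a jump over $C''$ with ends on $R$. We show it remains short, since $P_2^*$ has no neighbours in $R^*$ and, by parallelism and the girth bound, none in $P_1^*$ unless a shorter cycle or a forbidden theta appears. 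If $C''$ is odd, the parity of $C''$ versus $C$ gives a theta whose odd-cycle-shared ear is too short, contradicting Lemma \ref{lemma 0}(2). If $C''$ is even, then $P_2$ together with the short $(u_1,s_1)$/$(s_1,v_1)$ pieces gives two parallel short jumps over an even hole. By Lemma \ref{lemma 2}(1),(2) the ends must be adjacent or interleaved, which contradicts parallelism with distinct ends. Hence the jumps share an end.

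For (1), both jumps are local across single vertices, $s_1$ and $s_2$, and parallel with no common end. We apply the same rerouting to $P_2$. We look at $s_1$ and its attachments to $P_1$, and consider the component structure of $G$ minus $\{u_j,v_j\}$ or $\{s_i,\cdot\}$. Since there is no $K_2$-cut and no degree-$2$ vertex, the region between the attachments of $s_1$ on $P_1$ (and on $C$) must escape through some path. Choose a shortest such path. Using that no outside vertex has two neighbours on the relevant hole (girth $8$), it is an induced path from $s_i$ to $C\cup P_j$. Checking which end it lands on via Lemma \ref{lemma 0} then forces a short $(s_i,w_j)$-jump with $w_j\in\{u_j,v_j,s_j\}$.

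The main obstacle is this last step: controlling adjacencies between $P_1^*$ and $P_2^*$ and showing the escaping path is genuinely short rather than merely another local jump. This requires a careful case split on the parities of the four subpaths of $P_1,P_2$ cut at the $s$-attachments.
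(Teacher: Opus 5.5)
There is a genuine gap, and the central reduction your argument relies on is not available. (The paper imports this lemma from Chen without proof, so there is no in-paper argument to compare with.)

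\textbf{The reduction to a short jump over an even hole fails.} Since $C$ is an even hole of $G\in\mathcal H_\ell$, we have $|C|=2\ell$ and $|R|=2\ell-2$. Because $P_1^*$ is anticomplete to $R^*$ and $u_1v_1\notin E(G)$, the cycle $C''=P_1\cup R$ is a hole of length $|P_1|+2\ell-2>2\ell$. Hence $C''$ is odd, and $|P_1|$ is odd.
\begin{enumerate}[(1)]
\item Your branch ``$C''$ even'' is therefore vacuous.
\item In the branch ``$C''$ odd'' you claim a contradiction with Lemma~\ref{lemma 0}(2), but you name no theta, and $C\cup P_1$ is not a theta. That step uses neither $P_2$ nor the connectivity hypotheses, so it would forbid local jumps across one vertex altogether. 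This is false. Take an $8$-hole $C$, an induced $(u_1,v_1)$-path $P_1$ of length $13$ internally disjoint from $C$, and one extra edge $s_1x$ with $|u_1P_1x|=6$. This graph lies in $\mathcal H_4$ (its holes have lengths $8,8,9,19$), and $P_1$ is a local jump across $s_1$.
\item Every cycle of $C\cup P_1$ through $u_2,v_2\in R^*$ contains all of $R$. Any such cycle using $s_1$ together with part of $P_1$ has the chord $s_1u_1$ or $s_1v_1$. So the only holes there are $C$ and the odd hole $C''$. There is no ``suitable even hole'' on which Lemmas~\ref{lemma 2} and~\ref{c4.1} could be applied.
\item A $(u_1,s_1)$-path is never a short jump, since $u_1s_1\in E(G)$.
\item Lemma~\ref{lemma 2} requires both jumps to be of type-o when they are parallel. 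It also does not exclude parallel jumps with four distinct ends: its outcome $u_1u_2,v_1v_2\in E(G)$ can be a parallel configuration.
\end{enumerate}

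\textbf{The real content of the lemma is missing.} You assert that $P_2^*$ is disjoint from and anticomplete to $P_1^*$ ``by parallelism and the girth bound''. Nothing supports this, and it is exactly the trivial case. Write $C=u_1s_1v_1Av_2Du_2Bu_1$. If $P_1^*$ and $P_2^*$ were disjoint and anticomplete, then $u_1P_1v_1Av_2P_2u_2Bu_1$ would be a hole of length $|P_1|+|P_2|+2\ell-2-|D|$. This length is even, since $|P_1|$ is odd and type-o means $|P_2|\not\equiv |D| \pmod 2$. It also exceeds $2\ell$, a contradiction. In (1) the same happens: $|A|+|B|=2\ell-4$ and both $|P_i|$ are odd.

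So the whole lemma lives in the case where $P_1$ and $P_2$ intersect or are joined by edges. There you need to do the following.
\begin{enumerate}[(1)]
\item Choose $P_1,P_2$ with $|P_1\cup P_2|$ minimal.
\item Extract an induced path inside $P_1\cup P_2$ from $\{u_1,s_1,v_1\}$ to $\{u_2,s_2,v_2\}$ (resp.\ to $\{u_2,v_2\}$ in part (2)).
\item Use girth and the ban on even holes longer than $2\ell$ to control which vertices of $C$ see its interior. In (2) this must yield a contradiction; in (1) it must show the resulting jump is short with an end in $\{s_1,s_2\}$.
\end{enumerate}
Your sketch of (1), an ``escaping path'' supplied by the no-$K_2$-cut hypothesis, gives no control over where that path meets $C$ or whether it is short. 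You acknowledge this step is not done. As it stands, neither part is proved.
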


\begin{lemma}\label{lemma 4}
Let $\ell\geq 4$ be an integer and $C$ be an even hole of a graph in ${\cal H}_{\ell}$.
Let $S$ be a set of ends of all short jumps over $C$.
Assume that $G$ has neither a degree-$2$ vertex nor a $K_1$-cut or $K_2$-cut and some short jump over $C$ is of type-o.
Then $V(Q^*)\cap S\neq \emptyset$ for any $5$-vertex path $Q$ on $C$.
\end{lemma}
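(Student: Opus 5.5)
Let $P_0$ be a short $(a,b)$-jump of type-o over $C$. Suppose, for a contradiction, that $Q=q_1q_2q_3q_4q_5$ is a $5$-vertex path on $C$ with $\{q_2,q_3,q_4\}\cap S=\emptyset$. The plan is to apply Lemma \ref{lemma 1} to the two $3$-vertex paths $q_1q_2q_3$ and $q_3q_4q_5$, and to the middle path $q_2q_3q_4$. Each application produces either a short jump or a local jump, and we then force a conflict with $P_0$ using Lemmas \ref{lemma 2} and \ref{lemma 3}.

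To apply Lemma \ref{lemma 1} we need that no vertex outside $C$ has two neighbours in $C$. This should follow from girth $8$: a vertex with two neighbours on $C$ gives a cycle of length at most $|C|/2+2$. It also gives a theta subgraph whose ears include one of length $2$. Lemma \ref{lemma 0} then forbids this, since that ear has length neither $1$ nor $\ell$, and it is too short to be the long odd-shared ear.

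Take the path $q_2q_3q_4$. Since $q_2,q_4\notin S$, outcome (1) of Lemma \ref{lemma 1} fails. Since $q_3\notin S$, outcome (3) must give a local jump across one vertex with end $q_3$ (and no short jump), unless outcome (2) holds. So in every case we get a local jump across one vertex whose ends or centre lie among $q_2,q_3,q_4$. Concretely, this is either a $q_2$-$q_3$-$q_4$ local jump, or a $q_3$-$c$-$x$ local jump with $c$ a neighbour of $q_3$ on $C$.

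Call this local jump $P_1$. Lemma \ref{lemma 3}(2) says that if $P_1$ and $P_0$ are parallel, they share an end. In the crossing case, $P_0$ crosses $P_1$. Since $P_1$ spans only two edges of $C$, crossing forces an end of $P_0$ to be the centre of $P_1$. That centre lies in $\{q_2,q_3,q_4\}$ when the jump is $q_2$-$q_3$-$q_4$, which contradicts $S\cap Q^*=\emptyset$. In the remaining configurations the centre may lie outside $Q^*$, namely at $q_1$ or $q_5$ (or beyond). In those cases we repeat the argument on the paths $q_1q_2q_3$ and $q_3q_4q_5$ to obtain a second local jump $P_2$ on the other side.

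Now apply Lemma \ref{lemma 3}(1) to the two local jumps across one vertex. If they are parallel and do not share an end, there is a short jump with an end at one of the centres $s_i$ and the other end at $w_j\in\{u_j,v_j,s_j\}$. We then check that some such endpoint lies in $\{q_2,q_3,q_4\}$, which is a contradiction. If instead the two local jumps share an end, or cross, we use that $P_0$ must share an end with each of them, by Lemma \ref{lemma 3}(2). That pins the ends of $P_0$ inside $Q$, since $P_0$ has length greater than $\ell\ge4$ and its ends are far apart on $C$, so again we reach a contradiction.

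The main obstacle is the case analysis in the step where the local jumps produced by Lemma \ref{lemma 1} have centres outside $Q^*$ and overlap. Here Lemma \ref{lemma 3}(1) only gives a short jump with some end among six candidate vertices. I would first try to show that this short jump cannot have both ends in $\{q_1,q_5\}\cup(V(C)\setminus V(Q))$. For that I would use Lemma \ref{lemma 2} together with $P_0$: the new jump and $P_0$ must have adjacent ends or be nested in a specific way. I would also use Lemma \ref{c4.1} to control the type-e jumps.
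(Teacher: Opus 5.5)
Your opening matches the paper. Girth gives the hypothesis of Lemma~\ref{lemma 1}, and applying it to $q_2q_3q_4$ gives either a $q_2$-$q_3$-$q_4$ local jump, which Lemma~\ref{lemma 3}(2) excludes, or a $q_3$-$q_2$-$q_1$ or $q_3$-$q_4$-$q_5$ local jump. But you then misplace the centre of the latter. In a $q_3$-$q_2$-$q_1$ local jump the centre is $q_2\in Q^*$ and the ends are $q_3,q_1$. So Lemma~\ref{lemma 3}(2) immediately forces $P_0$ to be parallel to it and to end at $q_1$. This is where the real work starts, and your later steps do not do it.

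Write $C=q_1q_2\cdots q_8q_1$. If both side local jumps exist, every type-o short jump has ends exactly $q_1,q_5$. That is no contradiction, since $q_1,q_5\notin Q^*$. Your claim that the ends of $P_0$ are ``far apart'' because $|P_0|>\ell$ is also unfounded: the length of a type-o jump does not control the $C$-distance of its ends, and distance $4$ is allowed. The paper closes this case as follows:
\begin{itemize}
\item Type-e jumps have ends at $C$-distance $4$, so they too are $(q_1,q_5)$-jumps, hence $S=\{q_1,q_5\}$.
\item Then $q_6,q_7,q_8\notin S$, and Lemma~\ref{lemma 1} on the far side gives a $q_5$-$q_6$-$q_7$ or $q_7$-$q_8$-$q_1$ local jump.
\item Lemma~\ref{lemma 3}(1), applied with the local jump on the $Q$ side that shares no end with it, produces a short jump with an end in $\{q_2,q_6\}$ or $\{q_4,q_8\}$, a contradiction.
\end{itemize}

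The genuine gap is the case where only one side jump exists, say the $q_3$-$q_2$-$q_1$ local jump $Q'$. Then all type-o short jumps are $(q_1,v)$-jumps with $v\in\{q_5,q_6,q_7\}$. Lemmas~\ref{lemma 1}, \ref{lemma 2}, \ref{lemma 3} and \ref{c4.1} cannot exclude this. For example, a type-o $(q_1,q_5)$-short jump, together with $Q'$ and a $q_7$-$q_8$-$q_1$ local jump, is consistent with all their conclusions:
\begin{itemize}
\item Lemma~\ref{lemma 1} on $q_1q_2q_3$ is already satisfied by $q_1\in S$, so your ``second local jump $P_2$'' need not appear.
\item All the local jumps share the end $q_1$, so Lemma~\ref{lemma 3}(1) never applies.
\end{itemize}

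What is missing is the paper's direct structural argument combining $P_0$ with $Q'$. First, $Q'$ has odd length, since otherwise $Q'\cup(C\setminus\{q_2\})$ is an even hole of length at least $10$. Choose $P_0,Q'$ with $|P_0\cup Q'|$ minimal. The type-o condition fixes the parity of $|P_0|$. There are then two outcomes. Either $P_0^*$ is anticomplete to $(Q')^*$, and $P_0\cup Q'\cup q_3q_4\cdots v$ is an even hole longer than $8$. Or the edges between them yield a short jump with an end at $q_2$ or $q_3$, or a $(q_1,q_3)$-short jump. Both contradict $S\cap Q^*=\emptyset$. Your last paragraph only gestures at this case, so the proposal is not yet a proof.
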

\pf Suppose not.
Set $C=v_1v_2\cdots v_8v_1$.
Let $Q=v_1v_2v_3v_4v_5$ and $\{v_2, v_3, v_4\}\cap S=\emptyset$.
By Lemma \ref{lemma 3} (2), there does not exist a $v_2-v_3-v_4$ local jump.
And by Lemma \ref{lemma 1}, either there is a $v_1-v_2-v_3$ local jump or  a $v_3-v_4-v_5$ local jump.
Let $P$ be a type-o short jump.
Suppose there is a $v_1-v_2-v_3$ local jump. Then, by Lemma \ref{lemma 3}, one end of $P$ is $v_1$.
If there is also a $v_3-v_4-v_5$ local jump simultaneously,
the other end of $P$ is $v_5$.
Therefore, the ends of all type-o short jumps are $v_1$ and $v_5$.
Moreover, if there exists a type-e short jump, then by Lemma \ref{c4.1}, 
the ends of any type-e short jumps are also $v_1$ and $v_5$.
That is $S=\{v_1, v_5\}$.
Now, $\{v_6, v_7, v_8\}\cap S=\emptyset$.
By Lemma \ref{lemma 1}, either there is a $v_1-v_8-v_7$ local jump, or there is a $v_5-v_6-v_7$ local jump.
If there is a $v_1-v_8-v_7$ local jump,
then by Lemma \ref{lemma 3}
 there is a short $(v_4, v)$-jump where $v\in \{v_1, v_8, v_7\}$,
or there is a short $(v_8, v)$-jump where $v\in \{v_3, v_4, v_5\}$, which
contradicts  $S=\{v_1, v_5\}$.
So, $v_1-v_2-v_3$ local jump exists, denoted as $Q'$, while $v_4-v_5-v_6$ local jump does not exist.
Now, one of the ends of all  short jumps of type-o is $v_1$.

Next, we prove that there is no  $(v_1,v)$-short jump of type-o where $v\in \{v_5, v_6, v_7\}$.
Suppose not.
Assume that there is a  $(v_1,v_7)$-short jump,  denoted by $P$.
then $|P|$ is odd.
Let $P$ and $Q'$ be chosen with $|P\cup Q'|$ as small as possible.
Then there exists an edge with one end in $V(P^*)$ and the other end in $V(Q'^*)$, 
otherwise $P\cup Q'\cup v_3v_4v_5v_6v_7$  induces an even hole of length greater than 8,
a contradiction.
Then $P\cup Q'$  induces a $(v_2, v_7)$-jump denoted as $P'$.
Since $v_2\notin S$, $v_1$ or $v_3$ has a neighbor in $P'$.
And $N(v_3)\cap P'\neq \emptyset$, otherwise  it contradicts 
the edge with one end in $V(P^*)$ and the other end in $V(Q'^*)$.
Then either there is  a  $(v_1, v_3)$-short jump or there is a  $(v_3, v_7)$-short jump, which would contradict the fact that $v_3\notin S$.
So, there is no  $(v_1,v_7)$-short jump.
%$P$.

Assume that there is a  $(v_1,v)$-short jump of type-o,
denoted by $P$ where $v\in \{v_5, v_6\}$ ,
then the parity of $|P|$ is different from that of the path $v_1v_2v_3\cdots v$.
Similarly, either $P\cup Q'\cup v_3v_4\cdots v$ induces an even hole of length greater than 8,
or there is a  $(v_2, v)$-short jump
or a  $(v_3, v)$-short jump
or a  $(v_1, v_3)$-short jump, 
a contradiction.
Then there is no  short jump of type-o, a contradiction.
\qed

%\begin{corollary}\label{cor}
%Let $C$ be an even hole and set $C=v_1v_2\cdots v_8v_1$.
%If there is a $v_1-v_2-v_3$ local jump and there are no short $(w, v)$-jumps of type-o where $w\in \{v_1, v_2, v_3\}$ and $v\in V(C)$ with $w\neq v, wv\notin E$,
%then there is no local jump across one vertex that share an end with $v_1$ or $v_3$.
%\end{corollary}

\begin{lemma}\label{lemma 5}
Let $C$ be an even hole. Set $C=v_1v_2\cdots v_7v_8v_1$.
And let $P$  be  a $(v_1, v_3)$-short jump, 
and let $Q$ be 
a $(v_2, v_6)$-short jump.
Then $P$ is anticomplete to $Q$.
\end{lemma}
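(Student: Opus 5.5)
The plan is to argue by contradiction, using only the girth condition ($g(G)=8$, so every hole has length at least $8$), the absence of even holes of length at least $10$, and the parity/length information on short jumps coming from Lemma \ref{lemma 0}.

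\textbf{Step 1: pin down the lengths of $P$ and $Q$.} For $P$, the two $(v_1,v_3)$-paths of $C$ are $v_1v_2v_3$ (length $2$) and $v_3v_4\cdots v_8v_1$ (length $6$). A type-e short jump would need $|P|=|Q_1|=|Q_2|=4$, which is impossible here, so $P$ is of type-o. Hence $|P|$ is odd, and since $P\cup v_1v_2v_3$ is a hole of length $|P|+2\ge 8$, we get $|P|\ge 7$. For $Q$, both $(v_2,v_6)$-paths of $C$ have length $4$. So either $Q$ is type-e with $|Q|=4$, or $Q$ is type-o with $|Q|$ odd and $|Q|>4$, hence $|Q|\ge 5$. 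Because both jumps are short, $P^*$ has no neighbours on $C$ other than $v_1,v_3$, and $Q^*$ has none other than $v_2,v_6$. The statement to prove is therefore that $P^*\cap Q^*=\emptyset$ and that there is no edge between $P^*$ and $Q^*$.

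\textbf{Step 2: an extremal contact.} Suppose $P^*$ and $Q^*$ meet or are joined by an edge. Walking along $P$ from $v_1$, let $x$ be the first vertex of $P^*$ that lies in $Q^*$ or has a neighbour in $Q^*$. Among such contacts of $x$ in $Q^*$, choose $y$ as close to $v_6$ along $Q$ as possible; symmetrically, I would also consider the contact chosen closest to $v_2$. Let $a=|v_1Px|$, let $b=|yQv_6|$ and $b'=|v_2Qy|$, and let $\epsilon\in\{0,1\}$ record whether $x=y$ or $xy$ is an edge. This gives three cycles:
\[
A=v_1Px\,yQv_6\,v_7v_8v_1,\qquad B=v_2Qy\,xPv_1v_2,\qquad D=v_3PxyQv_6v_5v_4v_3 ,
\]
together with the variants obtained by routing through $v_3$ or $v_2$ instead. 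By the extremal choice of $x$ and $y$, and because neither jump has any other neighbours on $C$, the cycles $A$ and $B$ are holes. The lengths involved are $a+\epsilon+b+3$ for $A$, $a+\epsilon+b'+1$ for $B$, and $|P|-a+\epsilon+b+3$ for $D$, where $D$ may need a second extremal choice measured from $v_3$.

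\textbf{Step 3: parity contradiction.} Since $b+b'=|Q|$, $|P|$ is odd, and $|Q|$ is either $4$ or odd, the parities of $A$, $B$ and $D$ are linked. Comparing $A$ with $B$, or $A$ with $D$, one of these holes must be even. Girth $8$ then forces its length to be at least $10$, unless it has length exactly $8$. The length-$8$ cases are excluded as follows. The girth bound applied to the short cycles $B$ and $xPv_3v_2Qy$ forces $a+b'$ and $(|P|-a)+b'$ to be large. Together with $|P|\ge 7$, this makes the even hole have length greater than $8$, contradicting $G\in\mathcal H_4$. I would organise this as a small case table: $Q$ of type-e versus type-o, crossed with $\epsilon=0$ versus $\epsilon=1$.

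\textbf{Main obstacle.} The hard part is guaranteeing that the cycles built in Step 3, especially $D$, are genuinely induced when $P^*$ and $Q^*$ have several edges between them. Choosing $x$ extremal from $v_1$ does not control the chords seen from the $v_3$ side. If chords do appear there, my fallback is to pass to the induced subgraph $P\cup Q\cup C$ restricted to a minimal connecting structure. That structure is an induced theta subgraph whose ears can be read off, and then Lemma \ref{lemma 0} applies directly: either one ear has length one, which gives a cycle shorter than $8$, or the ear lengths violate the parity rules there.
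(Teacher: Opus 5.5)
\textbf{There is a genuine gap, and it is not confined to the multi-chord situation you defer to the end.} Your extremal choice (first contact $x$ on $P$ from $v_1$, then $y$ closest to $v_6$) certifies only that $A$ is a hole. The other cycles fail as follows.
\begin{itemize}
\item If $x$ has several neighbours on $Q$, then $B$ is a hole only for the contact closest to $v_2$. So the identity $b+b'=|Q|$ behind your parity comparison is lost.
\item The cycles $D$ and $xPv_3v_2Qy$ need not be induced at all, since later vertices of $xPv_3$ may have contacts in $Q^*$.
\item When $|Q|=4$, the sum $|A|+|B|=2a+6+|Q|$ is even, so comparing $A$ with $B$ gives nothing.
\end{itemize}

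More seriously, Step 3 does not close even when there is a single edge between $P^*$ and $Q^*$. Take $|P|=7$, $|Q|=4$ and one edge $xy$ with $|v_1Px|=3$ and $|yQv_6|=1$. Then:
\begin{itemize}
\item $A$ and $B$ are both $8$-holes;
\item $D$ and $xPv_3v_2Qy$ are $9$-holes;
\item your bounds $a+b'\ge 6$ and $(|P|-a)+b'\ge 6$ hold.
\end{itemize}
So nothing in your case table is violated. The only forbidden configuration here is the $10$-hole $v_1PxyQv_6v_5v_4v_3v_2v_1$. Equivalently, $v_1PxyQv_6$ is a short $(v_1,v_6)$-jump of length $5$. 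Your Step 3 never uses this.

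\textbf{The missing idea is to make the extremal choice on $Q$ rather than on $P$.} Let $v\in Q^*$ be the vertex closest to $v_6$ that has a neighbour in $P^*$, so that no vertex of $vQv_6$ other than $v$ has a neighbour in $P^*$. Let $u_1,u_2$ be the neighbours of $v$ on $P$ closest to $v_1$ and to $v_3$ respectively.

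The paper first shows $vv_2\in E$. Otherwise $P\cup vQv_6$ contains a short $(v_1,v_6)$- or $(v_3,v_6)$-jump, which together with $P$ contradicts Lemma~\ref{lemma 2}. This step uses the no degree-$2$ vertex and no $K_1$-, $K_2$-cut hypotheses that your plan tries to avoid.

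The argument then runs as follows.
\begin{itemize}
\item Since $vv_2\in E$, we get $|vQv_6|=|Q|-1\ge 3$, so $v$ is not adjacent to $v_6$.
\item Hence $v_1Pu_1vu_2Pv_3v_4\cdots v_8v_1$ together with $vQv_6$ is an induced theta.
\item Its base cycle is odd: it equals $P\cup v_3v_4\cdots v_8v_1$ if $u_1=u_2$, and has length at least $10$ otherwise.
\item So one of the other two cycles is an $8$-hole. This forces $|v_1Pu_1|=1$ or $|u_2Pv_3|=1$, giving a $4$-cycle $v_1v_2vu_1$ or $v_3v_2vu_2$.
\end{itemize}

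If you want to stay purely with girth and parity, the case $vv_2\notin E$ can instead be handled by the same theta. It yields a short $(v_1,v_6)$- or $(v_3,v_6)$-jump of length $5$, and hence a $10$-hole. However, the subcase $vv_6\in E$ with $u_1\ne u_2$, where the base cycle has a chord, then needs a separate argument.
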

\pf Suppose not.
Let $v\in V(Q^*)$ be closest to $v_6$ and have a neighbor on $P^*$.
Then $vv_2\in E$. 
Otherwise, there is a short $(v_1, v_6)$-jump or a short $(v_3, v_6)$-jump, which contradicts  Lemma \ref{lemma 2}.
Now, $vQv_6$ has length at least $3$.
If $v$ has a unique neighbor $v'$ on $P$, then $v_1P v_3v_4\cdots v_8v_1\cup v'vQv_6$ is a theta graph.
Since $P$ is a  $(v_1, v_3)$-short jump, $|P|$ is odd. 
Hence, $P\cup v_3v_4\cdots v_8v_1$ is an odd hole.
So either $v_1Pv'vQv_6v_7v_8v_1$ or $v'Pv_3v_4v_5v_6Qvv'$ is an even hole.
We may assume that $v_1Pv'vQv_6v_7v_8v_1$ is an even hole and of length  $8$, so $|v_1Pv'|=1$.
Now, $v_1v_2vv'v_1$ is a $4$-hole, which contradicts  $g(G)$.

Similarly, if $v$ has at least two neighbors  on $P$, 
let $u_1\in N(v)$ be closest to $v_1$ and $u_2\in N(v)$ be closest to $v_2$, then $u_1u_2\notin E$. 
Since $v_1Pu_1vu_2Pv_3v_4\cdots v_8v_1\cup vQv_6$ is a theta graph
and $v_1Pu_1vu_2Pv_3v_4\cdots v_8v_1$ has length greater than $8$, it is an odd hole.
So, either $v_1Pu_1vQv_6v_7v_8v_1$ or $u_2Pv_3v_4v_5v_6Qvu_2$ is an even hole.
We may assume that $v_1Pu_1vQv_6v_7v_8v_1$ is an even hole and has length $8$, so $|v_1Pu_1|=1$.
Now, $v_1v_2vu_1v_1$ is a $4$-hole, which contradicts  $g(G)$.
\qed

\section{Proof of Theorem~\ref{theorem 1}}

Assume for contradiction that the statement is false.
Let $C$ be an even hole of a graph $G\in {\cal H}_4$ and write $C=v_1v_2\cdots v_8v_1$.
Let $S$ be the set consisting of ends of short jumps over $C$.

%\noindent \textbf{Claim 3.1} 
\begin{claim}
$S\neq \emptyset$.
\end{claim}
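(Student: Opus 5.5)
We argue by contradiction: suppose $S=\emptyset$, so there is no short jump over $C$. The plan is to show that local jumps across one vertex would then have to be spread all around $C$, while Lemma~\ref{lemma 3} forces them to be clustered. We work under the standing assumptions that $G\in{\cal H}_4$ has no degree-$2$ vertex and no $K_1$-cut or $K_2$-cut. Since $g(G)=8$ and $C$ is an even hole, $C$ is an $8$-hole.

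\textbf{Step 1: applying Lemma~\ref{lemma 1}.} No vertex outside $C$ has two neighbours on $C$: such a vertex would close a cycle of length at most $2+4=6<8$. So Lemma~\ref{lemma 1} applies to $C$. Take a $3$-vertex path $v_{i-1}v_iv_{i+1}$ on $C$ (indices mod $8$). Since $S=\emptyset$, outcome (1) is impossible, and so is the short-jump option of (3). Hence there is either a $v_{i-1}-v_i-v_{i+1}$ local jump, or $v_i$ is an end of a local jump across one vertex, i.e.\ a $v_{i-2}-v_{i-1}-v_i$ or $v_i-v_{i+1}-v_{i+2}$ local jump.

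Call $v_j$ a \emph{center} if a $v_{j-1}-v_j-v_{j+1}$ local jump exists, and let $T$ be the set of centers. Step 1 shows that every $v_i$ has a center in $\{v_{i-1},v_i,v_{i+1}\}$. In other words, $T$ dominates $C$.

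\textbf{Step 2: clustering from Lemma~\ref{lemma 3}.} Take two local jumps across one vertex, with centers $v_i$ and $v_j$, so their ends are $\{v_{i-1},v_{i+1}\}$ and $\{v_{j-1},v_{j+1}\}$.
\begin{itemize}
\item If $j=i\pm1$, the four ends alternate around $C$, so the jumps are crossing.
\item If $j=i\pm2$, the jumps share an end, so they are parallel.
\item Otherwise the distance from $v_i$ to $v_j$ on $C$ is $3$ or $4$. Then the jumps are parallel and share no end.
\end{itemize}
In the last case Lemma~\ref{lemma 3}(1), applicable since $\ell=4$, yields a short jump over $C$, contradicting $S=\emptyset$. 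Hence any two centers are at distance at most $2$ on $C$.

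\textbf{Step 3: counting.} A subset of the $8$-cycle in which any two vertices are at distance at most $2$ lies within $3$ consecutive vertices. Indeed, if it contains $v_a$ and $v_{a+2}$, then every other member must be within distance $2$ of both, which forces it to be $v_{a+1}$. Therefore $T$ dominates at most $3+2=5<8$ vertices of $C$, contradicting Step 1. Thus $S\neq\emptyset$.

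The main point needing care is Step 2: checking the crossing/parallel classification and confirming that the hypotheses of Lemma~\ref{lemma 3}(1) — parallel jumps with no common end — hold exactly in the distance-$3$ and distance-$4$ cases. The rest is bookkeeping.
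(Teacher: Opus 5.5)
Your proof is correct and follows the paper's argument. By Lemma~\ref{lemma 3}(1), any two local jumps across one vertex must cross or share an end, so all their ends lie on a $5$-vertex path of $C$, and Lemma~\ref{lemma 1} applied to the remaining $3$-vertex path gives the contradiction. Your ``centers dominate $C$'' counting is an explicit version of that step. You also check two things the paper leaves implicit: that no vertex outside $C$ has two neighbours on $C$, and the crossing/parallel case analysis.
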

\pf Suppose it is false. 
That is, there is no short jump over $C$.
By Lemma \ref{lemma 3} (1),
every pair of local jumps  across one vertex either share an end or cross.
So there is a $5$-vertex path $Q$ on $C$ containing the ends of all local jumps across one vertex over $C$.
Then there exists a $3$-vertex path of $C\backslash V(Q)$ that contradicts  Lemma \ref{lemma 1}.
\qed

%\noindent \textbf{Claim 3.2} 
\begin{claim}
$|S|\neq 2$.
\end{claim}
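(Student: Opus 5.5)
Suppose for contradiction that $|S|=2$. Then all short jumps over $C$ share the same two ends; write them as $v_1$ and $v_k$. Since a short jump has non-adjacent ends, we have $k\in\{3,4,5\}$ up to symmetry. The plan is to locate a $3$-vertex subpath of $C$, far from $v_1$ and $v_k$, at which all three alternatives of Lemma \ref{lemma 1} fail. This mirrors the proof of Claim~3.1, and local jumps across one vertex are controlled by Lemma \ref{lemma 3}.

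\textbf{Type of the jumps.} A type-e short jump $P$ satisfies $|P|=|Q_1|=|Q_2|=4$, which forces $k=5$. Otherwise some short jump is of type-o.

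\textbf{Type-o case.} Here Lemma \ref{lemma 4} gives $Q^*\cap S\neq\emptyset$ for every $5$-vertex path $Q$ on $C$. Every $5$-vertex path has $3$ interior vertices, so every three consecutive vertices of $C$ must meet $S=\{v_1,v_k\}$. The eight vertices of $C$ split into two blocks $\{v_2,\dots,v_{k-1}\}$ and $\{v_{k+1},\dots,v_8\}$ of sizes $k-2$ and $8-k$, and both must have size at most $2$. This needs $k-2\le 2$ and $8-k\le 2$, i.e.\ $k\ge 6$, contradicting $k\le 5$. So this case closes immediately.

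\textbf{Type-e case.} All short jumps are type-e with ends $v_1,v_5$, and by Lemma \ref{c4.1} they share these ends. Consider the $3$-vertex paths $v_2v_3v_4$ and $v_6v_7v_8$, whose vertices lie outside $S$.
\begin{enumerate}[(a)]
\item Applying Lemma \ref{lemma 1} to $v_2v_3v_4$: alternative (1) fails because $v_2,v_4\notin S$. So either there is a $v_2-v_3-v_4$ local jump, or $v_3$ is an end of a local jump across one vertex, which must be $v_1-v_2-v_3$ or $v_3-v_4-v_5$.
\item Symmetrically, $v_6v_7v_8$ yields a local jump across one vertex inside the arc $v_5v_6v_7v_8v_1$, with middle vertex among $v_6,v_7,v_8$ or centred appropriately.
\item Such a local jump from (a) and one from (b) are parallel. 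If they do not share an end, Lemma \ref{lemma 3}(1) produces a short $(s_i,w_j)$-jump where $s_i$ is a middle vertex in $\{v_2,\dots,v_4,v_6,\dots,v_8\}$. Then $s_i\in S$, a contradiction.
\item The remaining case is that they share an end, and that shared end must be $v_1$ or $v_5$. I expect handling this leftover case to be the main obstacle.
\end{enumerate}

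\textbf{Closing the shared-end case.} Here I would combine a local jump, say the $v_3-v_4-v_5$ jump $L$ from (a), with a type-e short $(v_1,v_5)$-jump $P$ of length $4$. First, $P\cup L$ either gives a hole containing $v_5$ and avoiding $v_4$, or, after minimising $|P\cup L|$ as in the proof of Lemma \ref{lemma 4}, yields an edge between $P^*$ and $L^*$. The goal is to extract from this configuration either a short jump with an end outside $\{v_1,v_5\}$, or an even hole of length greater than $8$, or a cycle shorter than $8$. The length computations would follow the pattern of Lemma \ref{lemma 5}, using Lemma \ref{lemma 0} on the theta graph $C\cup P$, whose three ears all have length $4$. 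Getting a clean contradiction here requires careful case-checking.
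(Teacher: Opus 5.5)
Your type-o case, and your type-e case where the jumps from (a) and (b) share no end, are correct and match the paper. The gap is the shared-end case, which you leave open, and the route you sketch for it cannot succeed.

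The only shared-end pairs are a $v_1-v_2-v_3$ with a $v_7-v_8-v_1$ local jump, or symmetrically a $v_3-v_4-v_5$ with a $v_5-v_6-v_7$ local jump. You propose to play the $v_3-v_4-v_5$ jump $L$ against a type-e short $(v_1,v_5)$-jump $P$. However, a local jump across one vertex has odd length. Indeed, $L\cup(C\setminus\{v_4\})$ is a hole of length $|L|+6$, and $|L|=2$ would give the $4$-cycle $v_3xv_5v_4v_3$. So when $P^*$ and $L^*$ are anticomplete, the hole $v_1Pv_5Lv_3v_2v_1$ you mention has odd length $|L|+6$. Unlike in the proof of Lemma~\ref{lemma 4}, nothing then forces an edge between $P^*$ and $L^*$. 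For instance, suppose $v_4$ has a single neighbour $z$ on $L$ with $d_L(v_3,z)=6$. Then $C\cup P\cup L$ has girth $8$, all its holes are $8$-holes or odd, and its only short jump over $C$ is $P$. Lemma~\ref{lemma 3} says nothing here either, since $P$ is of type-e. So no short jump with an end outside $\{v_1,v_5\}$, no long even hole and no short cycle can be extracted from $C\cup P\cup L$ alone.

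The missing idea is to combine the two local jumps that share the end, rather than a local jump with the short jump. Let $L_1$ be the $v_3-v_4-v_5$ jump and $L_2$ the $v_5-v_6-v_7$ jump; both have odd length by the argument above. If $L_1^*$ and $L_2^*$ are disjoint and anticomplete, then $v_3L_1v_5L_2v_7v_8v_1v_2v_3$ is an even hole of length $|L_1|+|L_2|+4\geq 10$. Otherwise $L_1^*\cup L_2^*$ contains an induced $(v_3,v_7)$-path avoiding $v_5$ whose interior is anticomplete to $v_8,v_1,v_2$. One must then analyse how $v_4,v_5,v_6$ attach to this path to obtain a short jump with an end outside $S=\{v_1,v_5\}$. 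This is how the paper excludes the symmetric pair $v_7-v_8-v_1$, $v_1-v_2-v_3$: it obtains either an even hole through $v_3v_4\cdots v_7$ of length at least $10$, or a short jump with an end among $v_2,v_3,v_7,v_8$. Once the shared-end pair is excluded, your step (c) finishes the proof, since any jump from (a) and any jump from (b) would then have no common end.
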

\pf Suppose it is false.
If there exists a short jump of type-o, 
then we may assume that $v_1v_2v_3v_4v_5$ contains the ends of all short jumps.
Now, $\{v_6, v_7, v_8\}\cap S=\emptyset$, which contradicts  Lemma \ref{lemma 4}.
Hence, every short jump is of type-e.

Suppose that there exists a short $(v_1, v_5)$ jump of type-e, that is, $S=\{v_1, v_5\}$.
Since $S=\{v_1, v_5\}$, by Lemma \ref{lemma 1},
for $3$-vertex path $v_6-v_7-v_8$,
either there is a $v_7-v_8-v_1$ local jump, or there is a $v_6-v_7-v_8$ local jump, or there is a $v_5-v_6-v_7$ local jump.
Similarly, for $3$-vertex path $v_2-v_3-v_4$,
either there is a $v_1-v_2-v_3$ local jump, or there is a $v_2-v_3-v_4$ local jump, or there is a $v_3-v_4-v_5$ local jump.
Now, we show that there is no  $v_7-v_8-v_1$ local jump, or  $v_5-v_6-v_7$ local jump, or $v_1-v_2-v_3$ local jump, or $v_3-v_4-v_5$ local jump.
Suppose otherwise.
We may assume that there is a $v_7-v_8-v_1$ local jump, %then by Corollary \ref{cor},
then there is no $v_1-v_2-v_3$ local jump.
Otherwise, there exists a short jump with an end $v_7, v_8$, $v_2$, or $v_3$ or there exists an even hole containing $v_3v_4\cdots v_7$ of length at least $10$,
a contradiction.  
If there is a $v_2-v_3-v_4$ local jump, then by Lemma \ref {lemma 3}, there exists a short jump with an end $v_8$ or $v_3$;
and if there is a $v_3-v_4-v_5$ local jump, then by Lemma \ref {lemma 3}, there exists a short jump with an end $v_8$ or $v_4$.
A contradiction.
So, by Lemma \ref{lemma 1}, there exist a $v_6-v_7-v_8$ local jump and a $v_2-v_3-v_4$ local jump.
However, by Lemma \ref {lemma 3}, $v_3$ or $v_7$ is an end of some short jumps, which contradicts  $S=\{v_1, v_5\}$.
\qed

%\noindent \textbf{Claim 3.3} 
\begin{claim}
$|S|\neq 3$.
\end{claim}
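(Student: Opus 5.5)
The plan is to argue by contradiction, supposing $|S|=3$, and to split according to whether some short jump over $C$ is of type-o.

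\textbf{Case 1: some short jump is of type-o.} Lemma \ref{lemma 4} then says every $5$-vertex path $Q$ on $C$ has an internal vertex in $S$. So every set of three consecutive vertices of $C$ meets $S$, which forces the three vertices of $S$ to be spread out around $C$: the gaps between consecutive elements of $S$ along $C$ have sizes summing to $5$, each at most $2$, hence sizes $(2,2,1)$ up to rotation. After relabelling we may take $S=\{v_1,v_4,v_7\}$, with gaps $\{v_2,v_3\}$, $\{v_5,v_6\}$ and $\{v_8\}$.

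I then list which pairs in $S$ can be ends of short jumps. A short jump needs non-adjacent ends, and every element of $S$ must be an end of some jump. By Lemma \ref{c4.1}, all type-e jumps share one pair of ends, and a type-e jump of length $4$ must have antipodal ends. Since $S$ contains no antipodal pair, every short jump is of type-o.

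Lemma \ref{lemma 2} restricts the possible pairs of type-o jumps:
\begin{itemize}
\item two jumps sharing an end must have adjacent other ends;
\item two jumps with four distinct ends must satisfy the crossing-type condition of Lemma \ref{lemma 2} (2).
\end{itemize}
The candidate ends are $(v_1,v_4)$, $(v_4,v_7)$ and $(v_1,v_7)$. Both $v_1v_7$ and $v_4v_7$ are at distance $\geq 2$, so any two jumps sharing an end have non-adjacent other ends. Hence only one pair can occur, and then $|S|=2$, a contradiction.

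\textbf{Case 2: all short jumps are of type-e.} By Lemma \ref{c4.1}, all short jumps have the same two ends, so $|S|=2$, again a contradiction.

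I expect the main obstacle to be the bookkeeping in Case 1. I must check carefully that Lemma \ref{lemma 2} applies: both jumps must be of type-o when they are parallel, and sharing an end counts as parallel. I must also confirm that $v_7$ and $v_1$ are at distance $2$ on $C$, so that the jumps $(v_1,v_7)$ and $(v_1,v_4)$ sharing $v_1$ violate Lemma \ref{lemma 2} (1), because $v_4v_7\notin E$.

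If the spacing argument from Lemma \ref{lemma 4} leaves some configuration I have not handled, I would fall back on Lemma \ref{lemma 1} applied to $3$-vertex paths inside the gaps. That produces local jumps across one vertex, and Lemma \ref{lemma 3} then forces a new short-jump end outside $S$, in the same way as in the proof of the preceding claim.
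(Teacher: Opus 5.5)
Your proof is correct and uses the same ingredients as the paper's. Lemma \ref{lemma 4} forces $S$ to meet every three consecutive vertices of $C$; type-e jumps have antipodal ends and, by Lemma \ref{c4.1}, a single end-pair; and Lemma \ref{lemma 2}(1) forbids two type-o jumps that share an end but have non-adjacent other ends. The paper orders these steps differently, fixing the shared end $v_1$ first instead of pinning down $S=\{v_1,v_4,v_7\}$ globally as you do, and your fallback via Lemmas \ref{lemma 1} and \ref{lemma 3} is never needed.
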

\pf Suppose not.
Then there exist two short jumps sharing an end.
Clearly, these two short jumps are parallel and one of the short jumps is of type-o.
Assume that the shared end is $v_1$.
We may assume that one of the short jumps, say $P_1$, is a $(v_1, w_1)$-short jump, and the other short jump, say $P_2$, is a $(v_1, w_2)$-short jump where $w_1, w_2\in \{v_3, v_4, \cdots, v_7\}$ and $w_1\neq w_2$.
Then $S=\{v_1, w_1, w_2\}$.
By Lemma \ref{lemma 4}, $\{v_6, v_7\}\cap S\neq \emptyset$ and $\{v_3, v_4\}\cap S\neq \emptyset$.
Since $\{v_6, v_7\}$ is anticomplete to $\{v_3, v_4\}$, by Lemma \ref{lemma 2}, $P_1$ and $P_2$ cannot both be of type-o.
So we may assume that $P_1$ is of type-o and $P_2$ is of type-e, then
$w_2=v_5$, a contradiction.
\qed

\medskip
Therefore, we have $|S|\geq 4$.

%\noindent \textbf{Claim 3.4} 
\begin{claim}\label{claim 3.4}
For any $(s,t)$-short jump over $C$, $d_C(s,t)\neq 2$.
\end{claim}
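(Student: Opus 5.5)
Suppose for contradiction that $P$ is an $(s,t)$-short jump over $C$ with $d_C(s,t)=2$; by symmetry $s=v_1$, $t=v_3$. Then $Q_1=v_1v_2v_3$ and $Q_2=v_3v_4\cdots v_8v_1$ have lengths $2$ and $6$. A type-e short jump has all three ears of length $\ell=4$, so $P$ must be of type-o. Hence $|P|$ is odd and $|P|>4$, so $|P|\ge 5$, and $P\cup Q_2$ is an odd hole of length at least $11$.

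By Claims 3.1--3.3 we have $|S|\ge 4$, so some short jump $P'$, with ends $u,w$, has an end outside $\{v_1,v_3\}$. I would split according to whether $P'$ crosses or is parallel to $P$, using Lemma~\ref{lemma 2} and Lemma~\ref{c4.1} throughout.

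\emph{Parallel case.} Here both ends of $P'$ lie in $\{v_3,\dots,v_8,v_1\}$.
\begin{itemize}
\item If $P'$ is of type-o, Lemma~\ref{lemma 2} applies. If $P'$ shares an end with $P$, say $v_1$, then its other end must be adjacent to $v_3$, so it is $v_4$ (and symmetrically $v_8$ if the shared end is $v_3$). If $P'$ shares no end with $P$, the four ends must satisfy the adjacency pattern of Lemma~\ref{lemma 2}(2), which again leaves only a few placements next to $v_1$ or $v_3$.
\item If $P'$ is of type-e, its ends are at distance $4$ on $C$, and by Lemma~\ref{c4.1} all type-e short jumps share those ends.
\end{itemize}
Combining these constraints, I expect $S$ to be confined to a short arc around $v_1v_2v_3$, say $S\subseteq\{v_8,v_1,v_3,v_4\}$ together with at most one type-e pair. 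I would then apply Lemma~\ref{lemma 4}, which is available because a type-o jump exists, to a $5$-vertex path such as $v_4v_5v_6v_7v_8$ or $v_5v_6v_7v_8v_1$. This should give a contradiction, possibly after also invoking Lemma~\ref{lemma 1} and Lemma~\ref{lemma 3} on the middle vertices $v_5,v_6,v_7$, as in Claim 3.2.

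\emph{Crossing case.} Here one end of $P'$ is $v_2$ and the other is some $w\in\{v_4,\dots,v_8\}$. Lemma~\ref{lemma 2}(2) is satisfied automatically, since $v_1v_2v_3$ is a path on $C$. Some options are excluded directly:
\begin{itemize}
\item $w=v_6$ is handled by Lemma~\ref{lemma 5}: $P$ is anticomplete to $P'$.
\item $w=v_4$ or $w=v_8$ makes $P'$ another short jump at distance $2$, so I would run the argument on a pair chosen minimal with respect to $|P|+|P'|$.
\end{itemize}
For $w=v_5$ or $w=v_7$, and for $w=v_6$ after Lemma~\ref{lemma 5}, I would first take $P$ and $P'$ anticomplete, choosing them with $|P\cup P'|$ minimal. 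I would then examine the theta graph formed by the odd hole $P\cup Q_2$ and the path $P'$. Its two other cycles are the holes through $P'$, $v_2$, $v_1$ (resp.\ $v_3$), and the appropriate part of $Q_2$. Lemma~\ref{lemma 0}(2) forces one of these holes to be even. Since $|P'|\ge 4$, that hole has length greater than $8$ unless its length is pinned down exactly. In that exceptional situation I would derive a $4$- or $6$-cycle, contradicting the girth, in the style of the proof of Lemma~\ref{lemma 5}. If instead $P$ and $P'$ have edges between their interiors, I would take the vertex of $P'$ closest to $w$ that has a neighbour on $P$, and reroute to obtain a shorter short jump from $v_1$ or $v_3$ to $w$. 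That jump would then fall under the parallel-case constraints, yielding a contradiction.

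The main obstacle is this crossing case with $w\in\{v_5,v_7\}$. There Lemma~\ref{lemma 2} gives no restriction, so the contradiction must be extracted from parity counting together with the girth and the no-long-even-hole conditions. Tracking the parity of $|P'|$ against the arcs $v_2\cdots w$ is where I expect most of the care to be needed.
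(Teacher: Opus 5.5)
\textbf{There is a genuine gap.} Your preliminary deductions are right, and splitting into crossing and parallel jumps matches the paper's Subclaims 3.4.1 and 3.4.2. However, neither half closes the way you describe.

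\emph{Crossing case.} The step you rely on does not exist. If $P'=(v_2,w)$ is anticomplete to $P$, there is no induced theta made of the hole $P\cup Q_2$ and a path through $v_2$ and $P'$. The reason is that $v_2$ is adjacent to both $v_1$ and $v_3$. So any cycle of $H=G[V(C)\cup V(P)\cup V(P')]$ that uses both $P$ and $P'$ passes through $v_1,v_2,v_3$ but uses only one of the edges $v_1v_2$, $v_2v_3$, and the other is a chord. The two holes you name are just the two holes of the theta $C\cup P'$. They are both odd when $P'$ is type-o (forced unless $w=v_6$), and both have length $8$ when $P'$ is type-e. So Lemma~\ref{lemma 0}(2) yields nothing. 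In fact $H$ itself has girth $8$ and no even hole longer than $8$, so no parity or girth count inside $H$ can produce a contradiction. The rerouting step fails too. For $w=v_5$ (resp.\ $v_7$) it can produce a type-e $(v_1,v_5)$-jump (resp.\ $(v_3,v_7)$-jump), which Lemma~\ref{lemma 2} does not forbid. That only moves you into the parallel case, which is itself unresolved.

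\emph{Parallel case.} Lemma~\ref{lemma 4} does not finish it. Take $P$, a type-e $(v_1,v_5)$-jump, and a type-o $(v_4,v_8)$-jump.
\begin{itemize}
\item They satisfy Lemma~\ref{lemma 2} pairwise (via $v_1v_8,v_3v_4\in E$ and $v_1v_8,v_4v_5\in E$), and they satisfy Lemma~\ref{c4.1}.
\item $S=\{v_1,v_3,v_4,v_5,v_8\}$ meets the interior of every $5$-vertex path of $C$.
\item Every $3$-vertex path of $C$ has an end in $S$, so Lemma~\ref{lemma 1} is vacuous, and Lemma~\ref{lemma 3} has no local jump to act on.
\end{itemize}
After relabelling $v_i\mapsto v_{i-1}$, this is exactly the configuration the paper must rule out in Subclaim 3.4.2.

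\emph{The missing idea.} Both halves need $\delta(G)\ge 3$ and the absence of a $K_2$-cut, used directly through links. The paper's pattern is as follows.
\begin{enumerate}
\item Show that some edge of $C$ carries no link. A link there, continued using the no-$K_2$-cut condition, would give either a local jump across one vertex parallel to the type-o jump (contrary to Lemma~\ref{lemma 3}(2)) or a forbidden short jump.
\item The degree condition then forces links on the neighbouring edges.
\item The no-$K_2$-cut condition forces each of these links to continue to a third vertex of $C$. In the paper's labelling these are a $v_5-v_4-v_8$ link and a $v_6-v_7-v_3$ link.
\item The two links are shown to be anticomplete, and together with arcs of $C$ they close up into an even hole longer than $8$.
\end{enumerate}
All three cases of Subclaim 3.4.1 use this same link machinery. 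Without it, your plan has no source for a contradiction.
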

\pf Suppose not. We may assume that there exists a $(v_8, v_2)$-short jump.

%\noindent \textbf{Claim 3.4.1} 
\begin{subclaim}
There does not exist a short jump crossing  $(v_8, v_2)$-short jump.
\end{subclaim}
\pf Suppose not. We divide into three cases.

\medskip
\noindent \textbf{Case 3.4.1} There is no $(v_1, v_3)$-short jump.

Suppose not. We assume that there is a $(v_1, v_3)$-short jump.
Then we show that there is a $(v_6, w)$-short jump or $(v_5, w)$-short jump where $w\in V(C)$.
Suppose not. By Lemma \ref{lemma 4},
there exists a $(v_7, w_1)$-short jump and a $(v_4, w_2)$-short jump where $w_1, w_2\in V(C)$.
Then either $(v_7, w_1)$-short jump or $(v_4, w_2)$-short jump is of type-o.
We may assume that the $(v_4, w_2)$-short jump is of type-o.
Since both $(v_8, v_2)$-short jump and $(v_4, w_2)$-short jump are of type-o, and $v_4v_2\notin E$, then $w_2=v_1$.
Now, there is a $(v_1, v_4)$-short jump.
However, $v_7v_1\notin E$ and $v_7v_4\notin E$, and
$(v_7, w_1)$-short jump contradicts  Lemma \ref{lemma 2}.
So, there is a $(v_6, w)$-short jump or a $(v_5, w)$-short jump such that $w\in V(C)$.
We assume that there is a $(v_6, w)$-short jump where $w\in V(C)$.
Since $v_6v_1\notin E$ and $v_6v_3\notin E$ , by Lemma \ref{lemma 2},
$w=v_2$ and since $v_6v_8\notin E$, $(v_6, v_2)$-short jump is of type-e.
Since $v_5$ is not adjacent to $v_8, v_1, v_2$ or $v_3$,
by Lemma \ref{lemma 2},
there cannot  exist a short jump of type-o with end $v_5$.
If there were a short jump with $v_5$ as an end,
it could be of type-e,
which would contradict  Lemma \ref{c4.1}.
So, there is no short jump with $v_5$ as an end.
Similarly, there is no short jump with $v_4$ as an end.

Since $d(v_4)\geq 3$, there exists a $(v_4, v_5)$-link or $(v_3, v_4)$-link.
If there exists a $(v_4, v_5)$-link $P'$,
since $G$ has no $K_2$-cut,
$V(G)\backslash \{v_4,v_5\}$ is connected.
Then there is a path $P$ where $P\cap P'^*\neq \emptyset$ such that one  end of $P$, denoted by $v$, is connected to $V(C)\backslash \{v_4, v_5\}$.
If $vv_6\in E$, then either there is a $v_4-v_5-v_6$ local jump, which contradicts  $(v_2, v_8)$-short jump by Lemma \ref{lemma 3};
or there is a $(v_4, v_6)$-short jump, 
which contradicts  $(v_2, v_8)$-short jump by Lemma \ref{lemma 2}.
If $v$ is adjacent to $v_7, v_8, v_1$ or $v_2$, then there exists a  short jump with end $v_5$ or $v_4$, a contradiction. 
So $vv_3\in E$,
then either there is a $v_3-v_4-v_5$ local jump, which contradicts  $(v_2, v_8)$-short jump by Lemma \ref{lemma 3};
or there is a $(v_3, v_5)$-short jump, 
which contradicts  $(v_2, v_8)$-short jump by Lemma \ref{lemma 2}.
So there does not exist a $(v_4, v_5)$-link.

Since $d(v_5)\geq 3$ and  no $(v_4, v_5)$-link exists, there exists a $(v_5, v_6)$-link.
Similarly, $V(G)\backslash \{v_5,v_6\}$ is connected.
So there is a $v_5-v_6-v_2$ link.
Otherwise, there is a $v_5-v_6-v_7$ local jump, which contradicts  $(v_2, v_8)$-short jump.
Since $d(v_4)\ge 3$, there is a $v_2-v_3-v_4$ local jump or a $v_1-v_3-v_4$ link.
If there exists a $v_2-v_3-v_4$ local jump $Q$, let $(v_2, v_8)$-short jump be $P$.
Choose $P$ and $Q$ such that $|P|+|Q|$ as small as possible.
Then $P\cup Q$ induces a $(v_8, v_3)$-short jump or $(v_8, v_4)$-short jump or an even hole with length greater than $8$, a contradiction.
So there does not exist a $v_2-v_3-v_4$ local jump
and there is a $v_1-v_3-v_4$ link.

Let $Q$ and $P$ denote the  $v_1-v_3-v_4$ link and $v_5-v_6-v_2$ link, respectively.
Let $v''_{4}\in N_Q(v_3)$  that is closest to $v_1$,
and let $v'_{5}\in N_P(v_6)$ that is closest to $v_2$.
Then, apart from $v_4v_5$, there are no other edges between $v_1Qv_4$ and $v'_{5}Pv_5$.
Otherwise, there is a $(w, v)$-short jump where $w\in \{v_1, v_3, v_4\}$ and $v\in \{v_5, v_6\}$,
but none of these can  exist.
If $P^*-V(v'_{5}Pv_5)$ is not anticomplete to $Q$, then there is a $(w, v_6)$-short jump where $w\in \{v_1, v_3, v_4\}$
or there is a $(v_2, v_4)$-short jump or a $v_2-v_3-v_4$ local jump, a contradiction.
Then $v_5Pv_2v_1Qv_4v_5$ is an even hole with girth greater than $8$, a contradiction.
Thus, there is no $(v_1, v_3)$-short jump.
\qed

\medskip
\noindent \textbf{Case 3.4.2} There is no $(v_1, v_4)$-short jump.

Suppose not. 
%Now we assume that there is a $(v_1, v_4)$-short jump.
Since $v_6, v_7$ are not adjacent to $v_1$ or $v_4$,
by Lemma \ref{lemma 2}, there is no short jump with $v_6$ or $v_7$ as an end.
By Lemma \ref{lemma 4}, there must be a short jump with $v_5$ as the end.
And by Lemma \ref{lemma 2}, it can only be the $(v_5, v_1)$-short jump.
Then we show that there is no $(v_6, v_7)$-link.
Suppose not.
There is a $v_6-v_7-v_8$ local jump or $v_5-v_6-v_7$ local jump, which contradicts  $(v_1, v_4)$-short jump by Lemma \ref{lemma 3}.
Since $d(v_6)\geq 3$, there is a $v_4-v_5-v_6$ local jump or $v_6-v_5-v_1$
link.
If there is a $v_4-v_5-v_6$ local jump, which contradicts  $(v_2, v_8)$-short jump by Lemma \ref{lemma 3}.
Thus, there exists a $v_6-v_5-v_1$ link.
Similarly, there is a $v_7-v_8-v_1$ local jump or $v_7-v_8-v_2$ link.
If there is a $v_7-v_8-v_1$ local jump,
it is denoted as $P$.
Meanwhile, $(v_1, v_4)$-short jump is denoted as $Q$.
Since $G$ does not contain even holes with a length greater than $8$,
and by Lemma \ref{lemma 2}, there is no $(v_7, v_4)$-short jump.
Thus $P$ and $Q$ induce a $(v_4, v_8)$-short jump of type-e.
Therefore, either the vertex on $Q$ adjacent to $v_4$, denoted by $v$,
has a neighbor on $P$,
or the vertex on $Q$ at a distance of $2$ from $v_4$, denoted by $w$, has a neighbor on $P$.
If $|N_Q(v)|=1$, since there is no $(v_4, v_7)$-short jump, and $P\cup \{v, v_2, v_3, \cdots, v_6\}$ is a theta graph and $(v_1, v_4)$-short jump is of type-o, then $P\cup C\backslash \{v_8\}$ contains an even hole of length greater than $8$, a contradiction.
%Then, there is a $(v_1, v_4)$-short jump of length $5$, 
%which contradicts to $(v_1, v_4)$-short jump of type-o.
%and its union with $v_4v_5\cdots v_8v_1$ yields an even hole of length $10$, a contradiction.
If $|N_Q(v)|>2$, $\{v_8, v\}\cup P$ contains an even hole of length $4$, a contradiction.
So $|N_Q(v)|=0$.
If $|N_Q(w)|\ge 1$, similarly, $P\cup wQv_4\cup C\backslash \{v_8\}$ contains a theta graph,  then there exists an even hole of length greater than $8$, a contradiction.
So, there is no $v_7-v_8-v_1$ local jump, only a $v_7-v_8-v_2$ link.
Moreover, there are no other edges between the $v_7-v_8-v_2$ link and the $v_6-v_5-v_1$ link except $v_1v_2$ and $v_6v_7$;
Otherwise, there is a $(v_7, v)$-short jump where $v\in \{v_1, v_5, v_6\}$
or  a $(v, w)$-short jump where $v\in \{v_2, v_8\}, w\in \{v_5, v_6\}$
or a $v_7-v_8-v_1$ local jump, a contradiction.
Then the union of $v_7-v_8-v_2$ link, $v_6-v_5-v_1$ link, $v_1v_2$ and $v_6v_7$ is an even hole of length greater than $8$, a contradiction.
So, there is no $(v_1, v_4)$-short jump.
\qed

\medskip

\noindent \textbf{Case 3.4.3} There is no $(v_1, v_5)$-short jump.

Suppose not.
%Now we assume that there is a $(v_1, v_5)$-short jump.
If there exists a short jump with end $v_7$, by Lemma \ref{lemma 2},
it can only be a $(v_7, v_5)$-short jump,
but $v_5v_2\notin E$, which contradicts  Lemma \ref{lemma 2} (2).
So, there does not exist a short jump  with end $v_7$.
Similarly, there does not exist a short jump  with end $v_3$.
Then either there is no short jump with end $v_6$ or there is no short jump with end $v_4$.
Otherwise, by Lemma \ref{lemma 2}, there exist a $(v_6, v_2)$-short jump
of type-e and a $(v_4, v_8)$-short jump of type-e,
%which contradicts to $(v_2, v_8)$-short jump since $v_6v_8\notin E, v_2v_4\notin E$.
but now $v_2v_8\notin E, v_4v_6\notin E$,
which contradicts  Lemma \ref{lemma 2}.
So we may assume that there is no short jump with end $v_4$.

Now we show that there is no $(v_3, v_4)$-link.
Suppose not. Since $G\backslash \{v_3, v_4\}$ is connected,
either there is a $v_3-v_4-v_5$ local jump or there is a $v_2-v_3-v_4$ local jump.
If there is a $v_3-v_4-v_5$ local jump, by Lemma \ref{lemma 3}, which contradicts  $(v_2, v_8)$-local jump.
If there is a $v_2-v_3-v_4$ local jump, and denote it by $P$,
and denote the $(v_2, v_8)$-short jump by $Q$.
Then either $P\cup Q$ together with the path $v_4v_5v_6v_7v_8$ contains an even hole of length greater than $8$;
or $P\cup Q$ contains a $(v_3, v_8)$-short jump, which would contradict the $(v_1, v_5)$-short jump;
or $P\cup Q$ contains a $(v_4, v_8)$-short jump of type-e, a contradiction.
%in which case there must be a $(v_2, v_4)$-short jump that contradicts the $(v_2, v_8)$-short jump.
Thus there is  no $(v_3, v_4)$-link.

Since $d(v_4)\ge 3$, if there is a $v_4-v_5-v_6$ local jump,
by Lemma \ref{lemma 3}, which contradicts  $(v_2, v_8)$-local jump;
so there is a $v_4-v_5-v_1$ link, denoted by $P$.
Similarly, either there is a $v_3-v_2-v_1$ local jump, or there is a $v_3-v_2-v_8$ link, or there is a $v_3-v_2-v_6$ link.
If there is a $v_3-v_2-v_1$ local jump, denoted by $Q$,
choose $P$ and $Q$ such that $|P\cup Q|$ is minimized.
Then $P$ and $Q$ share at most one edge, this edge is incident to $v_1$, and $P^*$ is anticomplete to $Q^*$ by Lemma \ref{lemma 2} and there is not  $(v_3, v_4)$-link.
Then $P\cup Q\cup v_3v_4$ contains an even hole of length greater than $8$, a contradiction.
If there is a $v_3-v_2-v_6$ link, denoted by $Q$,
similarly, $P^*$ is anticomplete to $Q^*$.
Then $P\cup Q\cup v_3v_4\cup v_6v_7v_8v_1$ induces an even hole of length greater than $8$, a contradiction.
If there a $v_3-v_2-v_8$ local jump, denoted by $Q$,
by Lemmas \ref{lemma 2} and \ref{lemma 5},
$P^*$ is anticomplete to $Q^*$.
Then $P\cup Q\cup v_3v_4\cup v_8v_1$ induces an even hole of length greater than $8$, a contradiction.
So there is no $(v_1, v_5)$-short jump.\qed

Since $v_6, v_5$ are not adjacent to $v_8, v_2$, and there does not exist a short jump crossing  $(v_2, v_8)$-short jump,
therefore, by Lemma \ref{lemma 2},
there is no short jump with $v_6, v_5$ as the end.

%\noindent \textbf{Claim 3.4.2} 
\begin{subclaim}
There is no short jump parallel to $(v_2, v_8)$-short jump.
\end{subclaim}
\pf Suppose not. 
By Lemma \ref{lemma 4},
there exist short jumps with $v_7$ and $v_4$ as ends, respectively. Furthermore, by Lemma \ref{lemma 2}, the short jump with $v_4$ as an end is the $(v_4, v_8)$-short jump of type-e, and the short jump with $v_7$ as an end is the $(v_3, v_7)$-short jump.
Then we show that there is no $(v_5, v_6)$-link.
Otherwise, either there is a $v_6-v_5-v_4$ local jump or a $v_5-v_6-v_7$ local jump.
By Lemma \ref{lemma 3}, both contradict the $(v_2, v_8)$-short jump.
So there is no $(v_5, v_6)$-link.

Since $d(v_5)\ge 3$ and $d(v_6)\ge 3$, there exists a $(v_4, v_5)$-link and a $(v_6, v_7)$-link. When there is a $(v_4, v_5)$-link, since $\{v_4, v_5\}$ is not a cut, there is either a $v_3-v_4-v_5$ local jump or a $v_5-v_4-v_8$ link. Moreover, a $v_3-v_4-v_5$ local jump cannot exist; Otherwise  it would contradict the $(v_2, v_8)$-short jump. Therefore, only a  $v_5-v_4-v_8$ link is possible.
When there is a $(v_6, v_7)$-link, there is either a $v_6-v_7-v_3$ link or a $v_6-v_7-v_8$ local jump. If there is a $v_6-v_7-v_8$ local jump, 
%it contradicts Corollary \ref{cor}, 
together with $(v_2, v_8)$-short jump and $v_2v_3\cdots v_6$, either there is a short jump with end $v_6$, or there exists a $(v_2, v_7)$-short jump or there exists an even hole with length greater than $8$, a contradiction.
So only the $v_6-v_7-v_3$ link can exist.
Denote the $v_6-v_7-v_3$ link by $Q$, the $v_5-v_4-v_8$ link by $P$, and let $v'$ be the neighbor of $v_3$ on $Q$, and $v$ be the neighbor of $v_8$ on $P$. Then if there is an edge between $P^*$ and $Q^*$, it can only be $vv'$. Then $v_8vv'v_3v_2v_1v_8$ is a hole of length $6$, a contradiction. Hence $P$ and $Q$ are anticomplete. In this case, $P\cup Q \cup v_5v_6\cup v_8v_1v_2v_3$ forms an even hole of length greater than $8$, a contradiction. In conclusion, there is no short jump parallel to the $(v_2, v_8)$-short jump.
\qed

\medskip

%\noindent \textbf{Claim 3.5} 
\begin{claim}\label{claim 3.5}
For any $(s, t)$-short jump over $C$, $d_C(s, t)\neq 3$.
\end{claim}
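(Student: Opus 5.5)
We argue by contradiction, in the same style as Claim \ref{claim 3.4}. By symmetry we may assume there is a short $(v_1,v_4)$-jump $P$. Since $d_C(v_1,v_4)=3$ is odd, $P$ has type-o, because type-e jumps have $|P|=|Q_1|=|Q_2|=4$. Lemma \ref{lemma 0} (2) then gives $|P|>4$, so $|P|\geq 5$ and $|P|$ is even. By Claim \ref{claim 3.4}, no short jump over $C$ has ends at $C$-distance $2$. Hence every short jump has ends at distance $3$, or at distance $4$ (antipodal, type-e). By Lemma \ref{c4.1}, all type-e jumps share the same antipodal pair.

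First we pin down $S$. Lemma \ref{lemma 4} requires $S$ to meet the interior of every $5$-vertex path of $C$. Applied to $v_5v_6v_7v_8v_1$, it gives $\{v_6,v_7,v_8\}\cap S\neq\emptyset$. Applied to $v_4v_5v_6v_7v_8$, it gives $\{v_5,v_6,v_7\}\cap S\neq\emptyset$.

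Next we use Lemma \ref{lemma 2} against $P$. Any type-o short jump $P'$ parallel to $P$ and not sharing an end with it must satisfy Lemma \ref{lemma 2} (2). Its ends lie in $\{v_5,\dots,v_8\}$ at distance $3$, so $P'$ is the $(v_5,v_8)$-jump. Then $v_4v_5,v_8v_1\in E(G)$, which is consistent. A type-o jump sharing an end with $P$ would be $(v_1,v_6)$ or $(v_4,v_7)$, at distance $3$ from the end it shares with $P$. Lemma \ref{lemma 2} (1) then needs $v_4v_6\in E(G)$ or $v_1v_7\in E(G)$, which is false. Crossing type-o jumps must satisfy Lemma \ref{lemma 2} (2), namely $u_1u_2,v_1v_2\in E(G)$, or one end sits between and adjacent to both ends of the other. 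Checking each candidate at distance $3$ crossing $P$ should leave only $(v_2,v_5)$ or $(v_8,v_3)$, up to the reflection fixing $\{v_1,v_4\}$. A type-e jump $(v_i,v_{i+4})$ crossing $P$ is forced by the remark after Lemma \ref{lemma 2} to satisfy (2) as well. This leaves a small list of configurations: $S$ is contained in $\{v_1,v_4,v_5,v_8\}$ plus possibly one crossing pair or one type-e pair. Claim 3.2 and Claim 3.3 give $|S|\geq4$, which cuts this list down further.

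For each surviving configuration we proceed as in Claim \ref{claim 3.4}. Find a vertex $v_j$ that is not in $S$. Use $d(v_j)\geq 3$ and the absence of $K_1$- and $K_2$-cuts to produce a link at $v_j$ or a local jump across one vertex. Exclude local jumps across one vertex via Lemma \ref{lemma 3} (2) against the type-o jump $P$, which forces a shared end. What survives is a pair of links, such as $v_6-v_5-v_1$ and $v_7-v_8-v_4$ type links. Lemma \ref{lemma 2}, Lemma \ref{lemma 5} and the girth bound show that the interiors of these links are anticomplete. Their union with two edges of $C$ is then an even hole of length greater than $8$, a contradiction.

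The main obstacle is the case analysis in which both $(v_1,v_4)$ and $(v_5,v_8)$ are present, possibly together with a crossing jump. Here each $v_j$ lies close to some element of $S$, so Lemma \ref{lemma 1} gives less. I would handle it by choosing jumps minimizing $|P\cup Q|$ and using parity. $P\cup(v_4\cdots v_8v_1)$ is odd, so rerouting through a link flips the parity and yields an even hole longer than $8$ unless a forbidden short jump at distance $2$ appears, which Claim \ref{claim 3.4} excludes.
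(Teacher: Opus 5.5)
Your proposal has a genuine gap, and it begins with the classification of jumps. A short jump whose ends are at $C$-distance $4$ need not be of type-e. By Lemma~\ref{lemma 0}(2), a type-o jump between antipodal vertices simply has odd length at least $5$. Lemma~\ref{c4.1} only says that at most one antipodal pair carries type-e jumps.

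In your labeling, Lemma~\ref{lemma 2} applied against $P$ gives $v_6,v_7\notin S$, so Lemma~\ref{lemma 4} forces $v_5,v_8\in S$. The crossing options $(v_2,v_5)$ and $(v_3,v_8)$ die at once, since each leaves no admissible short jump at $v_8$ (resp.\ $v_5$). What remains is either a $(v_5,v_8)$-short jump, or \emph{both} a $(v_1,v_5)$- and a $(v_4,v_8)$-short jump, at most one of them type-e. The second alternative is consistent with Lemma~\ref{lemma 2}(1), because $v_4v_5,v_8v_1\in E(G)$, but your enumeration discards it on the false premise that antipodal jumps are type-e. This is exactly the configuration of Subclaim~\ref{Claim 3.5.2}; in the paper's labels it is $(v_1,v_6)$ together with $(v_1,v_5)$ and $(v_2,v_6)$. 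It occupies most of the paper's proof and relies throughout on the odd length of type-o antipodal jumps.

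Your generic finishing step does not close the remaining case either. Lemma~\ref{lemma 3}(2) only says that a local jump across one vertex which is parallel to a type-o short jump shares an end with it. It says nothing about local jumps that cross $P$, or that share an end with some short jump. For instance, with $P$ and a $(v_5,v_8)$-jump present, a $v_3-v_4-v_5$ local jump crosses $P$ and shares $v_5$ with the $(v_5,v_8)$-jump, so none of the cited lemmas excludes it. The paper needs Case 3.5.1 (and Case 3.5.3 in the other configuration) to kill such objects. That is a long parity analysis which repeatedly applies Claim~\ref{claim 3.4} to new $8$-holes built from the local jump and the short jump.

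Likewise, what survives is not a single pair of links with anticomplete interiors. In the paper's labels one must treat several combinations separately: $v_8-v_1-v_5$ versus $v_8-v_1-v_6$ links, and $v_3-v_2-v_5$ or $v_4-v_5-v_2$ versus $v_3-v_2-v_6$ and $v_4-v_5-v_1$ links. The links' interiors are not automatically anticomplete there. ``Minimize $|P\cup Q|$ and use parity'' names the tool, but it does not supply an argument that covers these cases.
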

\pf Suppose not. We may assume that there is a $(v_1, v_6)$-short jump.
Since $\{v_3, v_4\}$ is anticomplete to $\{v_1, v_6\}$, by Lemma \ref{lemma 2} there is no short jump with end $v_3$ or $v_4$. 
By Lemma \ref{lemma 4}, $\{v_2, v_5\}\subseteq S$.
Then we prove that there does not exist a short jump crossing  $(v_1, v_6)$-short jump.
Suppose not. By Lemma \ref{lemma 2} and Claim \ref{claim 3.4}, there is either a $(v_2, v_7)$-short jump or a $(v_5, v_8)$-short jump. Without loss of generality, assume that there is a $(v_5, v_8)$-short jump. 
However, since $v_2$ is not adjacent to either $v_5$ or $v_8$, by Lemma \ref{lemma 2} there is no short jump with $v_2$ as the end, contradicting $v_2\in S$.
Therefore, either there is a $(v_2, v_5)$-short jump, or there exist both a $(v_1, v_5)$-short jump and a $(v_2, v_6)$-short jump. 

%\noindent \textbf{Claim 3.5.1}
\begin{subclaim}\label{claim 3.5.1}
There is no $(v_2, v_5)$-short jump.
\end{subclaim}
\pf Suppose not.
Since $\{v_7, v_8\}$ is anticomplete to $\{v_2, v_5\}$, by Lemma \ref{lemma 2} there is no short jump with the end $v_7$ or $v_8$. 
Since $d(v_8)\ge 3$, there is a $(v_7, v_8)$-link or $(v_8, v_1)$-link.
If there is a  $(v_7, v_8)$-link, then either there exists a  $v_6-v_7-v_8$ local jump, or there exists a  $v_1-v_8-v_7$ local jump. By Lemma \ref{lemma 4}, both cases contradict  the  $(v_2, v_5)$-short jump. Therefore, there is no  $(v_7, v_8)$-link. Hence, there exists  $(v_8, v_1)$-link. Similarly, there exists a  $(v_6, v_7)$-link, a $(v_4, v_5)$-link, and a $(v_2, v_3)$-link. Furthermore, since $\{v_8, v_1\}$ is not a cut,  the $(v_8, v_1)$-link must connect to $v_2$, $v_5$, or $v_6$.
That is, there exists a $v_2-v_1-v_8$ local jump, or a $v_8-v_1-v_5$ link, or a $v_8-v_1-v_6$ link.

\medskip

\noindent \textbf{Case 3.5.1} There is no $v_2-v_1-v_8$ local jump.
%Next, we prove that there is no $v_2-v_1-v_8$ local jump.

Suppose not. Let $P$ be a $v_2-v_1-v_8$ local jump, 
and let $Q$ be a  $(v_2, v_5)$-short jump. Then $P^*$ and $Q^*$ are not anticomplete; 
Otherwise, $P \cup Q \cup v_5v_6v_7v_8$ would contain an even hole of length greater than 8, a contradiction.
Let $u$ be the neighbor of $v_1$ on $P^*$ closest to $v_8$, and let $v$ be the neighbor of $v_1$ on $P^*$ closest to $v_2$. Partition $P$ into three subpaths:
$P_1 = v_8Pu, P_2 = uPv, P_3 = vPv_2.$
Since there is no  $(v_2, v_8)$-short jump and no $(v_5, v_8)$-short jump, it follows that $P_1^*$ and $Q^*$  are anticomplete.
We now claim that $P_2$ and $Q$ are not anticomplete. Suppose not. 
Then there exists an edge between ${P_3}^*$ and $Q^*$.
Let $w'$ be the vertex on $Q$ closest to $v_5$ that has a neighbor in ${P_3}^*$. Then $w'v_5 \in E$.
Otherwise, $w'v_5 \notin E$.
If $w'$ has exactly one neighbor on $P_3$, denoted by $w_1$, then
$P \cup w_1w' Qv_5 \cup C \setminus v_2v_1v_8$
forms a theta graph. 
Since $|v_8Pw_1| \geq 7$, it follows that
$v_2P_3w_1w'Qv_5v_4v_3v_2$ is an even hole of length $8$.
In this case,
$v_2P_3w_1w'Qv_5v_6v_7v_8v_1v_2$
is an even hole of length greater than $8$, a contradiction.
If $w'$ has at least two neighbors on $P_3$, let $w_1$ be the neighbor closest to $v_8$, and let $w_2$ be the neighbor closest to $v_2$.
Then $v_8Pw_1 \cup w_2Pv_2 \cup C \setminus v_8v_1v_2 \cup w'Qv_5$
forms a theta graph, and since $|v_8Pw_1| \geq 7,$
it follows that $v_2P_3w_2w'Qv_5v_4v_3v_2$
is an even hole of length $8$.
In this case, $v_2P_3w_2w'Qv_5v_6v_7v_8v_1v_2$ 
is an even hole of length greater than $8$, a contradiction.
Therefore, $w'v_5\in E$.
Similarly, $w'$ has at least two nonadjacent neighbors on $P_3$.
%Let $x_1\in N_P(v_1)$  closest to $v_2$. 
Let $x_1\in N_P(w')$  be closest to $v$, 
and let $x_2\in N_P(w')$  be closest to $v_2$. 
Suppose $v_1vPx_1w'x_2Pv_2v_1$ is a hole of length $8$. 
Then one of the following holds:
either $vx_1 \in E$ and $|x_2Pv_2| = 3,$ in which case $v_2Px_2w'v_5v_6v_7v_8v_1v_2$ is a hole of length $10$;
or $|vPx_1|=|x_2Pv_2|=2,$ in which case $ v_2Px_2w'v_5v_4v_3v_2$
is a hole of length $7$;
or $|vPx_1|=3$ and $x_2v_2 \in E,$
in which case $v_2Px_2w'v_5v_4v_3v_2$ is a hole of length $6$.
Both cases yield a contradiction.
Therefore, $v_1vPx_1w'x_2Pv_2v_1$ is an odd hole.
Since the $(v_2, v_5)$-short jump has even length, it follows that
$v_2Px_2$ has even length, with $|v_2P_3x_2| \geq 4,$
and $vP_3x_1$ has odd length.
Since $v_1vP_3x_1w'v_5$ is either odd or has length $4$,  $vx_1\in E.$
Moreover, since $v_8Px_1w'v_5v_6v_7v_8$ is an odd hole, $v_8Pvx_1$
has even length.
Also, since $ v_1vx_1P_3x_2v_2v_1$ has length greater than $8$, it is an odd hole, so $x_1P_3x_2$ has even length.
Thus, $v_8Pv_2$ has even length, a contradiction.
Therefore, $P_2$ and $Q$ are not anticomplete.

Let $w'$ be the vertex on $Q$ closest to $v_5$ that has a neighbor on $P$. Then $w'$ has a neighbor on $P_2$, and by the above argument,
$w'v_5 \in E.$
If $w'$ has exactly one neighbor on $P$, denoted by $x$, where
$x\in V(P_2),$ then $P\cup v_5w'x \cup C \setminus v_8v_1v_2$
forms a theta graph. Since  $|v_8Px|, |v_2Px|\geq 6,$
this theta graph induces three odd holes, which is impossible.
Therefore, $w'$ has at least two neighbors on $P$. 
Let $x_1$ be the neighbor of $w'$ on $P_2$ closest to $u$.
We now prove that $N_P(w') \subseteq N_{P_2}(w').$
Suppose not.
If $w'$ has at least two neighbors on $P_3$, let $x_2$ be the neighbor closest to $v$, and let $x_3$ be the neighbor closest to $v_2$. Then
$|v_2P_3x_3| \geq 4$ and is even.
If $vPx_2$ has even length, then $v_1vPx_2w'x_3Pv_2v_1$ is an even hole of length greater than $8$, a contradiction.
Therefore, $vPx_2$ has odd length.
Similarly, $uPx_1$ has odd length.
Since a $(v_1, v_5)$-short jump is either of length $4$ or odd, it follows that $ux_1 \in E,  vx_2\in E.$
Thus, $v_1ux_1w'x_2vv_1$ is a hole of length $4$, a contradiction.
Hence, $w'$ has exactly one neighbor on $P_3$, denoted by $x_2$.
Then $uPx_1, vPx_2$ are either of length $1$ or of even length.
If $|uPx_1| = 1,$ then $|vPx_2|$ is even and $|vPx_2|\geq 4.$
Since $|v_2Px_2|$ is also even and at least $4$, $v_1vPv_2v_1$
is an even hole of length greater than $8$, a contradiction.
Therefore, $|uPx_1|$ is even.
Since $|v_2Px_2|$ is even and at least $4$, $v_1uPx_1w'x_2Pv_2v_1$
is an even hole of length greater than $8$, again a contradiction.
Therefore, in all cases, $N_P(w') \subseteq N_{P_2}(w').$
Let $x_1\in N_{P_2}(w')$ be closest to $u$, and let $x_2\in N_{P_2}(w')$ be closest to $v$. 
Then $ux_1\notin E, vx_2\notin E.$
Suppose not. Without loss of generality, assume $ux_1 \in E.$
Then $v_8Pu$ is a $(v_8, u)$-short jump on $v_1ux_1w'v_5v_6v_7v_8v_1$,
contradicting  Claim \ref{claim 3.4}.
Therefore, $ux_1 \notin E, vx_2 \notin E.$
Since the $(v_1, v_5)$-short jump is type-o,
$uPx_1$ and $x_2Pv$ both have even length.
Thus, $v_1uPx_1w'x_2Pvv_1$ is an even hole, so it must have length $8$. Hence, $|uPx_1| = |x_2Pv| = 2.$
Let $x$ be the internal vertex of $uPx_1$, and let $x'$ be the internal vertex of $x_2Pv$.
Since $Q$ is type-o, $v_2Qw'$ has odd length and length at least $5$.
Moreover, neither $\{u, x, x_1\}$ nor $\{x_2, x', v\}$ is anticomplete to
$(v_2Qw')^*.$ 
Suppose not. Without loss of generality, assume that $\{u, x, x_1\}$ is anticomplete to $(v_2Qw')^*.$ 
Then $v_1uxx_1w'Qv_2v_1$ is an even hole of length greater than $8$, a contradiction.
Let $y\in V(Q)$ be closest to $v_2$ that has a neighbor in ${u, x, x_1, x_2, x', v}.$
Since $v_1uxx_1w'x_2x'vv_1$ is an $8$-hole, $y$ has exactly one neighbor in $\{u, x, x_1, x_2, x', v\}.$
We now prove that $yv \in E.$
Suppose not.
If $yu \in E,$ then since $|v_8Pu|\geq 6,$
$v_8Puxx_1w'v_5v_6v_7v_8$ is an odd hole, so $|v_8Pu|$ is even.
Moreover, $v_8PuyQv_2v_3\cdots v_8$ is also an odd hole, which implies that
$v_2Qy$ has even length.
Thus, $v_2Qyuxx_1w'v_5v_4v_3v_2$ is an even hole of length greater than $8$, a contradiction.
If $yx \in E,$ then $v_1v_2Qyx$ is a $(v_1, x)$-short jump on
$v_1uxx_1w'x_2x'vv_1$, contradicting  Claim \ref{claim 3.4}.
If $yx_1\in E$, then since $v_8Px_1$ has even length and
$v_8Px_1yQv_2$ has odd length, $yQv_2$ has even length.
In this case, both $v_2Qyx_1w'v_5v_4v_3v_2$ and $v_2Qyx_1w'v_5v_6v_7v_8v_1v_2$ are even holes of different lengths, a contradiction.
If $yx_2 \in E$,
then $v_2Qyx_2w'v_5$ is a short jump over $C$, so $v_2Qy$ has odd length.
Moreover, $v_1vx'x_2yQv_2v_1$ is an even hole, so $|v_2Qy| = 3.$
Then $v_1uxx_1w'x_2yQv_2v_1$ is a $10$-hole, a contradiction.
If $yx' \in E$, then $v_1v_2Qyx'$ is a $(v_1, x')$-short jump over
$v_1uxx_1w'x_2x'vv_1,$ again contradicting  Claim \ref{claim 3.4}.
Therefore, $yv \in E.$
If $y \notin V(P_3)$, then $v_2Qyvx'x_2w'v_5v_4v_3v_2$ has length greater than $8$, so it is an odd hole. 
Hence, $v_2Qy$ has odd length.
Thus, $v_1vyQv_2v_1$ is an even hole, and therefore it must be an $8$-hole.
Then $v_2v_3v_4v_5w'x_2x'v$ is a $(v_2, v)$-short jump over
$v_1vyQv_2v_1$, contradicting  Claim \ref{claim 3.4}.
Therefore, $y \in V(P_3)$.
Similarly, $v_2Qy=v_2P_3y,$ and this path has odd length.
Then again, $v_1vyQv_2v_1$ is an even hole, so it must be an $8$-hole.
Thus, $v_2v_3v_4v_5w'x_2x'v$ is a $(v_2, v)$-short jump over $v_1vyQv_2v_1,$ contradicting  Claim \ref{claim 3.4}.
Therefore, no $v_2-v_1-v_8$ local jump exists.
By symmetry, there is also no $v_3-v_2-v_1$ local jump,
$v_7-v_6-v_5$ local jump, or
$v_4-v_5-v_6$ local jump.
\qed

\medskip

\noindent \textbf{Case 3.5.2} There is no $v_8-v_1-v_6$ link.
%Next, we prove that there is no $v_8-v_1-v_6$ link.

Suppose not.
Let $w_1$ be the neighbor of $v_1$ on the $v_8-v_1-v_6$ link that is closest to $v_6$. Let $Q$ be the subpath of this link from $v_8$ to $w_1$.
Let $w$ be the neighbor of $v_6$ on the $v_8-v_1-v_6$ link, and let
$P_1$ be the subpath from $w_1$ to $w$ along this link.
If there also exists a $v_4-v_5-v_2$ link, 
let $w_2$ be the neighbor of $v_5$ on this link that is closest to $v_2$. Let $Q'$ be the subpath of the $v_4-v_5-v_2$ link from $v_4$ to $w_2$.
Let $w'$ be the neighbor of $v_2$ on this link, and let $P_2$
be the subpath from $w_2$ to $w'$ along the $v_4-v_5-v_2$ link.
Since there is no $v_8-v_1-v_2$ local jump and no $(v_2, v_8)$-short jump, 
$Q\cup P_1\backslash \{w\}$ is anticomplete to $P_2\backslash \{w_2\}$.  
If $w_2$ has a neighbor on $P_1$, then $w_2$ can only have neighbors on ${P_1}^*$, denoted $w''$.  
Then $Q' \cup w''P_1w$ either contains a $v_4-v_5-v_6$ local jump or a $(v_4, v_6)$-short jump, a contradiction.  
Thus $P_1\backslash \{w\}$ is anticomplete to $P_2$.  
Since $P_1$ and $P_2$ cannot be anticomplete, $w$ has a neighbor in $P_2 \backslash \{w'\}$.  
Then $Q'\cup {P_2}^* \cup \{w\}$ contains a $v_4-v_5-v_6$ local jump or a $(v_4, v_6)$-short jump, a contradiction.
So there does not exist a $v_4-v_5-v_2$ link.
If there is simultaneously a $v_3-v_2-v_5$ link,
denote by $w_2$ the neighbor of $v_2$ on the $v_3-v_2-v_5$ link that is closest to $v_5$, 
denote by $Q'$ the subpath from $v_3$ to $w_2$ on the $v_3-v_2-v_5$ link, 
denote by $w'$ the neighbor of $v_5$ on the $v_3-v_2-v_5$ link, 
and denote by $P_2$ the subpath from $w_2$ to $w'$ on the $v_3-v_2-v_5$ link. 
Then $Q\cup {P_1}^*$ is anticomplete to $Q'\cup {P_2}^*$;
Otherwise there is a $(v_8, v_3)$-short jump, 
a $v_8$-$v_1$-$v_2$ local jump, a $(v_8, v_2)$-short jump, 
or a $(v_1, v_3)$-short jump, a contradiction. 
Since $P_1$ and $P_2$ cannot be anticomplete, either $w$ has a neighbor in $P_2$ or $w'$ has a neighbor in $P_1$. 
Without loss of generality, assume $w$ has a neighbor in $P_2$.
Let $w^*\in N_{P_2}(w)$ that is closest to $w_2$. 
By the parity of the $(v_2, v_6)$-short jump, 
either $w_2w^* \in E$ or $|w_2P_2w^*|$ is even.
Since $v_1w_1P_1ww^*P_2w_2v_2v_1$ has length greater than $8$, 
we have $w_2w^* \in E$. 
Because $w_2Q'v_3v_4v_5w'P_2w_2$ has length greater than $8$, $|w_2Q'v_3|$ is even. 
If $w$ has no neighbor in $Q'$, then $w_2Q'v_3v_4v_5v_6ww^*w_2$ is an even cycle of length greater than $8$. 
Hence $w$ has a neighbor in $Q'$; 
let $x\in N_{Q'}(w)$ be closest to $v_2$, 
and let $y\in  N_{Q'}(v_2)$ be closest to $x$. 
Then $v_2yQ'xwv_6$ is a $(v_2, v_6)$-short jump, 
so either $xy \in E$ or $|xP_2y|$ is even. 
If $xy \in E$, then either $w_2xww^*w_2$ is a $4$-hole or $v_2yxww^*w_2v_2$ is a $6$-hole, a contradiction. 
If $|xP_2y|$ is even, then $v_1w_1P_1wxQ'yv_2v_1$ is an even hole of length greater than $10$, a contradiction. 
Therefore there is no $v_3-v_2-v_5$ link; only a $v_3-v_2-v_6$ link and a $v_4-v_5-v_1$ link are possible.
Let $w_1$ be the neighbor of $v_2$ on the $v_3-v_2-v_6$ link that is closest to $v_6$, 
let $Q$ be the subpath from $v_3$ to $w_1$ on the $v_3-v_2-v_6$ link, let $w$ be the neighbor of $v_6$ on the $v_3-v_2-v_6$ link, 
and let $P$ be the subpath from $w_1$ to $w$ on the $v_3-v_2-v_6$ link. 
Let $w_2$ be the neighbor of $v_5$ on the $v_4-v_5-v_1$ link that is closest to $v_1$, 
let $Q'$ be the subpath from $v_4$ to $w_2$ on the $v_4-v_5-v_1$ link, let $w'$ be the neighbor of $v_1$ on the $v_4-v_5-v_1$ link, and let $P'$ be the subpath from $w_2$ to $w'$ on the $v_4-v_5-v_1$ link. 
Then $Q$ and $Q'$ are anticomplete; Otherwise  there would be a $(v_3, v_4)$-link, a $(v_2, v_4)$-short jump, a $(v_3, v_5)$-short jump, a $v_3$-$v_2$-$v_5$ link, or a $v_4-v_5-v_2$ link, a contradiction. 
Moreover, $P'$ is anticomplete to $Q \cup P$; Otherwise  there would be a cycle of length less than $8$, a $(v_3, v_1)$-short jump, or a $v_3-v_2-v_1$ local jump, a contradiction. 
Similarly, $P$ is anticomplete to $Q' \cup P'$. Since $v_1w'P'w_2Q'v_4v_3v_2v_1$ is an odd hole, $v_1w'P'w_2Q'v_4$ has even length. 
Similarly, $v_6wPw_1Qv_3$ has even length. 
Then $v_1v_8v_7v_6wPw_1Qv_3v_4Q'w_2P'w'v_1$ is an even hole of length greater than $8$, a contradiction. 
Therefore there is no $v_8-v_1-v_2$ link. \qed

\medskip

Similarly, there is no $v_3-v_2-v_5$ link and no $v_4-v_5-v_2$ link. 
Thus only the $v_3-v_2-v_6$ link and the $v_4-v_5-v_1$ link can exist. Similar to the above discussion, this leads to a contradiction.
Therefore, Subclaim \ref{claim 3.5.1} holds.
%that is, there is no $(v_2, v_5)$-short jump. 
%To prove Claim \ref{claim 3.5}, it suffices to show that there is no $(v_1, v_5)$-short jump and no $(v_2, v_6)$-short jump.
%This part of the discussion is placed in the following claim.
\qed  

%\noindent \textbf{Claim 3.5.2} 
\begin{subclaim}\label{Claim 3.5.2}
There is no $(v_1, v_5)$-short jump and no $(v_2, v_6)$-short jump.
\end{subclaim}
\pf Suppose not.
Since $d(v_8)\ge 3$,  there is either a $(v_7, v_8)$-link or a $(v_8, v_1)$-link. 
If there is a $(v_7, v_8)$-link, then there is a $v_6-v_7-v_8$ local jump or a $v_7-v_8-v_1$ local jump. 
If there is a $v_7-v_8-v_1$ local jump, then by Lemma \ref{lemma 3}, the $(v_2, v_6)$-short jump is of type-e, and in this case there is no $v_6-v_7-v_8$ local jump. 
If there is only a $(v_1, v_8)$-link, then it is a $v_8-v_1-v_2$ local jump, 
or a $v_8-v_1-v_5$ link, or a $v_8-v_1-v_6$ link. 
If there is a $v_7-v_8-v_1$ local jump, 
then there is no $(v_3, v_4)$-link; Otherwise , either there is a $v_2-v_3-v_4$ local jump, and by Lemma \ref{lemma 3}, 
there is a short jump with end $v_3$ or $v_8$, or there is a $v_3-v_4-v_5$ local jump, and by Lemma \ref{lemma 3},  there is a short jump with end $v_4$ or $v_8$, a contradiction. 
Hence either there is no $(v_7, v_8)$-link, or there is no $(v_3, v_4)$-link. 
We may assume there is no $(v_7, v_8)$-link, so there exist both a $(v_1, v_8)$-link and a $(v_6, v_7)$-link.

\medskip

\noindent \textbf{Case 3.5.3} There is no $v_2-v_1-v_8$ local jump. 

%We now prove that there is no $v_2$-$v_1$-$v_8$ local jump. 
Suppose not. 
Denote the $v_2-v_1-v_8$ local jump by $Q$, 
and let $x$ be the neighbor of $v_1$ on $Q$ that is closest to $v_8$.
Assume that simultaneously there is a $v_5-v_6-v_7$ local jump.  
Denote the  $v_5-v_6-v_7$ local jump by $P$, 
and let $w'_1$ be the neighbor of $v_6$ on $P$ that is closest to $v_7$, and $w'_2$ the neighbor of $v_6$ on $P$ that is closest to $v_5$. 
Since $Q$ is odd and $P$ is odd, $Q^*$ and $P^*$ are not anticomplete; Otherwise  $P \cup Q \cup v_7v_8 \cup v_2v_3v_4v_5$ would be an even hole of length greater than $8$.
Since there is no $(v_7, v_8)$-link and no short jump with end $v_7$ or $v_8$, 
the only possibility is that $xQv_2$ and $w'_1Pv_5$ are not anticomplete. We now show that any edge between $xQv_2$ and $w'_1Pv_5$ must have an end that is a neighbor of $v_2$ on $Q$ or a neighbor of $v_5$ on $P$.
Suppose otherwise. 
Then $Q \cup \{v_1\}$ contains a path with end $v_1$, denoted $P'$, whose other end is denoted $w$, such that $N_P(w) \neq \emptyset$ and $(P')^*$ is anticomplete to $P$. 
Assume $v_1w \notin E$. If $|N_P(w)| = 1$, then $v_5Pv_7v_8v_1v_2v_3v_4v_5 \cup v_1P'w$ is a theta graph, and none of the three holes in this theta graph is an even hole, a contradiction. 
If $|N_P(w)| \ge 2$, then $v_5Pw_2ww_1Pv_7v_8v_1v_2v_3v_4v_5 \cup v_1P'w$ is a theta graph, where $w_1\in N_P(w)$  is closest to $v_5$, $w_2\in N_P(w)$ is closest to $v_7$, and none of the three holes in this theta graph is an even hole, a contradiction. 
Therefore, $v_1w \in E$.
If $w$ has only one neighbor on $P$,  denoted by $w'$. 
Since $v_5Pv_7v_8v_1v_2v_3v_4v_5 \cup v_1w$ induces a theta graph, it must contain an even hole, so $v_1ww'Pv_5v_4v_3v_2v_1$ is an $8$-hole and $d_P(w', v_5) = 2$. 
Then $w'Pw'_2v_6v_5$ is a $(w', v_5)$-short jump over $v_1ww'Pv_5v_4v_3v_2v_1$, contradicting  Claim \ref{claim 3.4}. 
Hence $w$ has at least two neighbors on $P$.
Let $w_1\in N_P(w)$ be closest to $w'_1$, and let $w_2\in N_P(w)$ be closest to $v_5$. 
We now prove that $N_P(w) \not\subseteq P_2$. 
Suppose otherwise, $w_2\in V(P_1)$ is closest to $w'_2$. 
Since the $(v_1, v_6)$-short jump has even length, 
$w'_1Pw_1$ and $w'_2Pw_2$ have odd length at least $3$. 
Then $ww_1Pw'_1v_6w'_2Pw_2w$ is an even hole of length greater than $8$, a contradiction. 
Hence $N_{P_3}(w) \neq \emptyset$. 
Suppose $|N_{P_3}(w)|=1$, then $w_2 \in V(P_3)$. 
Then $v_1ww_2Pv_5$ is a $(v_1, v_5)$-short jump over $C$, 
so either $|w_2Pv_5| = 2$ or $|w_2Pv_5|$ is odd. 
If $|w_2Pv_5| = 2$, then $w_2Pw'_2v_6v_5$ is a short jump over $v_1ww_2Pv_5v_4v_3v_2v_1$, contradicting  Claim \ref{claim 3.4}. 
Thus $|w_2Pv_5|$ is odd. By the parity of the $(v_1, v_6)$-short jump, $|w_1Pw'_1|$ is odd of length at least $3$, and then $ww_1Pw'_1v_6v_5Pw_2w$ is an even hole of length greater than $8$, a contradiction. 
Therefore $|N_{P_3}(w)| \ge 2$.
Let $w_3 \in N_{P_3}(w)$ be closest to $w'_2$. 
Then by the parity of the $(v_1, v_6)$-short jump, both $|w_1Pw'_1|$ and $|w_3Pw'_2|$ are odd and at least $3$. 
Then $ww_1Pw'_1v_6w'_2Pw_3w$ is an even hole of length greater than $8$, a contradiction.
Thus, every edge between $xQv_2$ and $w'_1Pv_5$ must have an end that is either a neighbor of $v_2$ on $Q$ or a neighbor of $v_5$ on $P$. 
Let $w \in N_Q(v_2)$ and assume without loss of generality that $w$ has a neighbor in $P$. 
If $w$ has exactly one neighbor in $P$,  denoted by $w'$. 
Since $v_5Pv_7v_8v_1\cdots v_5 \cup v_2ww'$ is a theta graph and must contain an even hole, the only possibility is that $v_2ww'Pv_5v_4v_3v_2$ is an $8$-hole, but then $v_2ww'Pv_5v_6\cdots v_8v_1v_2$ is a $10$-hole, a contradiction.
Hence $w$ has at least two neighbors in $P$; 
let $w_1$ be the neighbor closest to $w'_1$ and $w_2$ be the neighbor closest to $w'_2$.
Since $wQv_8v_7v_6w'_1Pw_1w$ has length greater than $8$, we have $w'_1w_1 \in E$. 
Similarly, $w'_2w_2 \in E$. 
Then $ww_1w'_1v_6w'_2w_2w$ is a $4$-hole, a contradiction. 
Therefore there is no $v_5-v_6-v_7$ local jump.

Suppose there is a $v_7-v_6-v_2$ link, and denote it by $P$. 
Choose $P$ such that $|Q \cup P|$ is minimized.
Then $P^*$ and $Q^*$ are not anticomplete; 
Otherwise there would be either a $(v_7, v_8)$-link or an even hole of length greater than $8$. 
Since there is no short jump with ends $v_7$ or $v_8$, 
$P \cup Q$ induces a path, denoted by $P'$, with one end $v_6$ and the other end $w$, where $w$ is the unique vertex of $V(P')$ that has a neighbor in $Q$. 
Clearly $N_Q(w) \subseteq xQv_2$. 
Let $w'$ be the neighbor of $w$ in $Q$ closest to $v_2$. 
If $v_6w \notin E$, then since $Q \cup v_2v_3\cdots v_8 \cup v_6P'w$ contains a theta graph, 
and since theta graph contains an even hole, 
we have $w'v_2 \in E$ and $|v_6P'w| = 2$. 
If $|N_Q(w)| = 1$, then $Q \cup \{v_1\}$ contains a $(v_1, w')$-short jump over $v_1v_8v_7v_6P'ww'v_2v_1$, contradicting  Claim \ref{claim 3.4}. 
Hence $|N_Q(w)| \ge 2$; then $Q \cup \{v_1, w\}$ contains a $(v_2, w)$-short jump over $v_2v_3v_4v_5v_6P'ww'v_2$, contradicting  Claim \ref{claim 3.4}. 
Thus $v_6w \in E$. 
If $|N_Q(w)| = 1$, similarly $d_Q(w', v_2) = 2$, 
and $Q \cup \{v_1\}$ contains a $(v_2, w')$-short jump over $v_2v_3v_4v_5v_6ww'Qv_2$, contradicting  Claim \ref{claim 3.4}. 
Hence $|N_Q(w)| \ge 2$. 
Let $w''\in N_P(w')$ be closest to $v_7$.
From the above, $w''v_6 \in E$, and $w''$ could be $w$. 
Since $w''w'Qv_2v_1v_8v_7Pw''$ has length greater than $8$, it is an odd hole, 
and $v_7Pw''$ has odd length greater than $7$. 
Let $x'\in N_Q(w'')$ be closest to $x$. Since $v_8Qx'w''Pv_7v_8$ has length greater than $8$, it is an odd hole, so $v_8Qx'$ has even length greater than $4$. 
Then $v_8Qx'w''v_6v_7v_8$ is an even hole of length greater than $8$, a contradiction. 
Therefore there is no $v_7-v_6-v_2$ link.

Suppose there is a $v_7-v_6-v_1$ link, and denote it by $P'$.
Let $w\in N_{P'}(v_6)$  be closest to $v_1$, and let $w'\in N_{P'}(v_1)$. Denote $v_6wP'v_1$ by $P$. 
By Claim \ref{claim 3.5.1}, there is no $(v_2, v_5)$-short jump.
Assume the $(v_1, v_5)$-short jump is type-e; then every $(v_2, v_6)$-short jump has odd length. 
Take a $(v_2, v_6)$-short jump and denote it by $Q$. 
Since there is no $v_7-v_6-v_2$ link, $V(P) \cap V(Q) = \emptyset$. Moreover, $P^*$ and $Q^*$ cannot be anticomplete; 
Otherwise $P \cup Q \cup v_1v_2$ would be an even hole of length greater than $8$. 
Since there is no $v_7-v_6-v_2$ link, only $w'$ of $V(P^*)$ has neighbors in $Q$. 
Let $w_1\in N_Q(w')$ be closest to $v_2$. 
Since $w'w_1Qv_2v_3\cdots v_6Pw'$ has length greater than $8$, it is an odd hole, so $v_2Qw_1$ has odd length. 
Then $v_2Qw_1w'v_1v_2$ is an even hole, which must be an $8$-hole. 
Then $w'Pv_6v_5\cdots v_2$ is a $(w', v_2)$-short jump over $v_2Qw_1w'v_1v_2$, contradicting  Claim \ref{claim 3.4}. 
Hence the $(v_1, v_5)$-short jump is type-o.
Let $Q$ be a $(v_1, v_5)$-short jump and denote by $w_1$ the neighbor of $v_1$ on $Q$. 
If $P^*$ and $Q^*$ are anticomplete, then $P \cup Q \cup v_5v_6$ yields an even hole of length greater than $8$, a contradiction. 
Since there is no $v_5-v_6-v_7$ local jump, either $w'$ has a neighbor in $Q$ or $w$ has a neighbor in $P$; 
in either case, $P \cup Q \cup v_5v_6$ contains an even hole of length greater than $8$, a contradiction.
Hence there is no $v_7-v_6-v_1$ link. 
In summary, there is no $v_2-v_1-v_8$ local jump.
Similarly, there is no $v_1-v_2-v_3$ local jump, no $v_4-v_5-v_6$ local jump, and no $v_5-v_6-v_7$ local jump.

\medskip

\noindent \textbf{Case 3.5.4} There is no $v_8-v_1-v_5$ link.

%We now prove that there is no $v_8$-$v_1$-$v_5$ link.
Suppose otherwise, and denote the $v_8-v_1-v_5$ link by $P'$. 
Let $x\in N_{P'}(v_1)$ be the neighbor of $v_1$ on $P'$ closest to $v_8$, and $w$ the neighbor farthest from $v_8$. 
Denote $v_8P'x$ by $P_1$, $xP'w$ by $P_2$, and $v_1wP'v_5$ by $P$. 
Assume simultaneously there is a $v_7-v_6-v_2$ link,  denoted by $Q'$. 
Let $x'$ be the neighbor of $v_6$ on $Q'$ closest to $v_7$, and $w'$ the neighbor farthest from $v_8$. 
Denote $v_7Q'x'$ by $Q_1$, $x'Q'w'$ by $Q_2$, and $v_6w'Q'v_2$ by $Q$.
By  Lemma \ref{c4.1}, we may assume that all $(v_1, v_5)$-short jumps are of type-o, that is, they have odd length. 
We now show that $w$ is anticomplete to $Q'$. 
Suppose otherwise. Since there is no short jump with end $v_7$, 
either $w$ is not anticomplete to $Q_2^*$ or $w$ is not anticomplete to $Q^*$.
Moreover, $w$ is anticomplete to $Q^*$; 
Otherwise there would be a $v_8-v_1-v_2$ local jump. 
Similarly, $P_1 \cup P_2$ is anticomplete to $Q^* \setminus \{w'\}$, $w'$ is anticomplete to $P^*$, and $Q_1 \cup Q_2$ is anticomplete to $P^* \setminus \{w\}$.
Hence $w$ is not anticomplete to $Q_2^*$.
Take the neighbor $w''$ of $w$ in $Q_2$ that is closest to $w'$. 
Then $ww''Q_2w_1v_6v_5Pw$ is an even hole of length greater than $8$, where $w_1$ is the neighbor of $v_6$ on $w''Q_2w'$ closest to $w''$, a contradiction. 
Thus $P^*$ is anticomplete to $Q'^*$ and $V(P) \cap V(Q) = \emptyset$. 
If $P'^*$ is anticomplete to $Q'^*$, then since $|P'|$ and $|Q'|$ are both even, $P' \cup Q' \cup v_7v_8 \cup v_2v_3v_4v_5$ would be an even hole of length greater than $8$, a contradiction.
Hence $P'^*$ and $Q'^*$ are not anticomplete, and $Q_2$ is not anticomplete to $P_2$. 
Let $w''$ be the vertex in $Q_2$ closest to $w'$ that has a neighbor in $P_2$. 
Then there is a path through $w''$ connecting $v_1$ and $v_6$ whose internal vertices lie in $V(P_2) \cup V(Q_2)$. 
Together with $P \cup v_5v_6$, this yields an even hole of length greater than $8$, a contradiction.
Therefore, there is no $v_7-v_6-v_2$ link.

Suppose there is a $v_7-v_6-v_1$ link, denoted by $Q$, and let $x'$ be the neighbor of $v_6$ on $Q$ closest to $v_1$, and denote $v_6x'Qv_1$ by $Q'$; then $Q'$ has even length. 
If there exists a $(v_1, v_5)$-short jump of type-o,  denoted by $Q_1$, then $Q_1$ has odd length. 
In this case, either $Q' \cup Q_1 \cup v_5v_6$ contains an even hole of length greater than $8$, or there is a $v_5-v_6-v_7$ local jump, a contradiction. 
Hence every $(v_1, v_5)$-short jump is type-e. 
Consequently, every $(v_2, v_6)$-short jump is type-o. 
Take a $(v_2, v_6)$-short jump and denote it by $Q_1$. 
Then $V(Q'^*) \cap V(Q_1^*) = \emptyset$; Otherwise  there would be a $v_7-v_6-v_2$ link, a contradiction. 
Since $Q'$ has even length and $Q_1$ has odd length, $Q'^*$ and $Q_1^*$ are not anticomplete; Otherwise  $Q' \cup Q_1 \cup v_1v_2$ would be an even hole of length greater than $8$, a contradiction. 
Let $w'$ be the neighbor of $v_1$ on $Q'$. 
Then only $w'$ on $Q$ has neighbors in $Q_1$; Otherwise there would be a $v_7-v_6-v_2$ link, a contradiction. 
Let $w_1$ be the neighbor of $w'$ in $Q_1$ closest to $v_2$. Since $ww_1Q_1v_2v_3v_4v_5v_6Q'w$ has length greater than $8$, it is an odd hole, so $w_1Q_1v_2$ has odd length. 
Then $v_1ww_1Q_1v_2v_1$ has even length, so it is an $8$-hole. 
Then $wQ'v_6v_5v_4v_3v_2$ is a $(w, v_2)$-short jump on $v_1ww_1Q_1v_2v_1$, contradicting  Claim \ref{claim 3.4}. 
Hence, there is no $v_7-v_6-v_1$ link. 
In summary, there is no $v_8-v_1-v_5$ link. 

If there were a $v_8-v_1-v_6$ link, a similar argument for the case with a $v_7-v_6-v_1$ link would lead to a contradiction. 
Thus there is no $(v_1, v_5)$-short jump and no $(v_2, v_6)$-short jump. 

The proof about $d_C(s,t)\neq 4$ is similar to the proof of Subclaim \ref{Claim 3.5.2}, where there exists a $(s,t)$-short jump over $C$.
Hence, Theorem \ref{theorem 1} is true.
\qed

%Since $\{v_2, v_3\}$ is anticomplete to $\{v_5, v_8\}$, by Lemma \ref{lemma 2} there is no short jump with end $v_2$ or $v_3$. Similarly, because $v_4$ is not adjacent to $v_1, v_6$, there is no short jump with end $v_4$. Then $\{v_2, v_3, v_4\}\cap S=\emptyset$, contradicting Lemma \ref{lemma 4}.

\end{document}